\numberwithin{equation}{section}
                        \theoremstyle{plain}
\newcommand\no[1]{}
\newtheorem{theorem}{Theorem}[section]
\newtheorem{lemma}[theorem]{Lemma}
\newtheorem{corollary}[theorem]{Corollary}
\newtheorem{proposition}[theorem]{Proposition}
\newtheorem*{conjecture*}{Conjecture}
\newtheorem*{theorem*}{Theorem}
\newtheorem*{corollary*}{Corollary}
\theoremstyle{definition}
\newtheorem{remark}[theorem]{Remark}
\newtheorem{definition}[theorem]{Definition}
\def\BC{\mathbb C}
\def\BZ{\mathbb Z}
\def\BT{\mathbb T}
\def\BQ{\mathbb Q}
\def\CR{\mathcal R}
\def\la{\langle}
\def\ra{\rangle}
\DeclareMathOperator{\tr}{\mathrm tr}
\DeclareMathOperator{\SL}{\mathrm SL}
\def\ve{\varepsilon}
\def\be { \begin{equation} }
\def\ee { \end{equation} }
\begin{document}
\allowdisplaybreaks
\baselineskip16pt
\title{Adjoint Reidemeister torsions of once-punctured torus bundles}

\begin{abstract}
  Gang, Kim and Yoon have recently proposed a conjecture on a vanishing identity of adjoint Reidemeister torsions of hyperbolic 3-manifolds with torus boundary,
  from the viewpoint of wrapped M5-branes.
  In this paper, we provide infinitely many new supporting examples to this conjecture.
  These examples come from hyperbolic once-punctured torus bundles.
  We show that the vanishing identity holds for all hyperbolic once-punctured torus bundles with tunnel number one.
  We also show the vanishing identity does not hold for any torus knot exteriors.
\end{abstract}

\author{Anh T. Tran}
\address{Department of Mathematical Sciences,
The University of Texas at Dallas,
Richardson, TX 75080-3021, USA}
\email{att140830@utdallas.edu}

\author{Yoshikazu Yamaguchi}
\address{Faculty of Commerce,
  Waseda University,
  1-6-1 Nishiwaseda, Shinjuku-ku,
  Tokyo, 169-8050, Japan}
\email{shouji@waseda.jp}

\thanks{2020 {\it Mathematics Subject Classification}.
Primary 57K31, Secondary 57K32.}

\thanks{{\it Key words and phrases.\/}
Reidemeister torsion, torus bundle.}

\maketitle

\section{Introduction}
D.~Gang, S.~Kim and S.~Yoon proposed a vanishing identity of the adjoint Reidemeister torsion in~\cite{Gang}.
Their vanishing identity is based on the observation by \textit{3D--3D correspondence} between 3D supersymmetric quantum field theories and mathematics of 3-manifolds and knots.
 
\begin{conjecture*}[{\cite[Conjecture 1.1]{Gang}}]
  Let $M$ be a compact $3$-manifold with a torus boundary whose interior admits a hyperbolic structure.
  Suppose that the character variety of irreducible $\SL_2(\BC)$-representations of $\pi_1(M)$ consists of only
  irreducible components of dimension $1$. Then for any slope $\gamma \in H_1(\partial M; \BZ)$ we have
  \[\sum_{[\rho] \in \tr_{\gamma}^{-1}(z)} \frac{1}{\BT_{M, \gamma}(\rho)} = 0\]
  for generic $z \in \BC$. Here $\tr_\gamma$ is the trace function of $\gamma$ defined by $\tr_\gamma([\rho])=\tr \rho(\gamma)$ on the character variety
  and $\BT_{M, \gamma}(\rho)$ is the adjoint Reidemeister torsion with respect to $\rho$ and $\gamma$.
\end{conjecture*}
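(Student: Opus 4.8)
The plan is to recast the fiber sum as a sum of residues of a single meromorphic $1$-form and then apply the residue theorem on the compactified character variety, so that the conjecture becomes a statement about residues at the ideal points. The key input is Porti's change-of-curve formula for the adjoint Reidemeister torsion: on each $1$-dimensional component $X$ of the character variety, the torsions attached to two slopes differ by the ratio of the differentials of their trace functions, which means that the meromorphic $1$-form
\[
  \eta \;:=\; \frac{d\tr_\gamma}{\BT_{M,\gamma}}
\]
does not depend on the slope $\gamma$. First I would fix a smooth projective model $\overline{X}$ of $X$ and check that $\eta$ extends to a meromorphic $1$-form on $\overline{X}$; on the open locus of irreducible characters where some trace function is a local coordinate and the torsion is finite and nonzero, the form $\eta$ is holomorphic, so its poles are confined to a finite set of non-generic points together with the ideal points $\overline{X}\setminus X$. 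I would also record the local computation that, at a point $p$ with $\tr_\gamma(p)=z$ and $d\tr_\gamma(p)\neq 0$,
\[
  \operatorname{Res}_p\!\left(\frac{\eta}{\tr_\gamma-z}\right)=\frac{1}{\BT_{M,\gamma}(p)}.
\]

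For generic $z$ the map $\tr_\gamma:\overline{X}\to\mathbb{P}^1$ is unramified over $z$ and the fiber $\tr_\gamma^{-1}(z)$ avoids every non-generic point, so the left-hand side of the conjecture is exactly $\sum_{p\,:\,\tr_\gamma(p)=z}\operatorname{Res}_p\bigl(\eta/(\tr_\gamma-z)\bigr)$. The residue theorem on the compact Riemann surface $\overline{X}$ says that the total sum of residues of $\eta/(\tr_\gamma-z)$ is zero, and summing this over all components of the character variety, the conjecture becomes equivalent to the vanishing of
\[
  \sum_{X}\ \sum_{q\,\notin\,\tr_\gamma^{-1}(z)}\operatorname{Res}_q\!\left(\frac{\eta}{\tr_\gamma-z}\right),
\]
the inner sum running over the poles of $\eta$ away from the fiber. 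Since $\tr_\gamma$ is regular on the affine curve, every point lying over $z=\infty$ is an ideal point; thus after I dispose of the finitely many non-generic affine points --- where the $\gamma$-independence of $\eta$ lets me pick a slope whose trace function is a local coordinate and thereby evaluate or remove the residue --- all remaining contributions come from the ideal points $\overline{X}\setminus X$.

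The main obstacle is precisely the contribution of the ideal points. Near an ideal point $p_\infty$ the representations degenerate, and by Culler--Shalen theory this is governed by an associated essential surface and boundary slope; the asymptotics of the eigenvalues of the boundary holonomy, and hence the growth of $\tr_\gamma$ and of $\BT_{M,\gamma}$, should be extractable from the Neumann--Zagier data of the component. The crux is to expand $\eta/(\tr_\gamma-z)=d\tr_\gamma/\bigl(\BT_{M,\gamma}(\tr_\gamma-z)\bigr)$ in a local uniformizer at each $p_\infty$ and show that the resulting residue is zero. I expect this to be the hardest and most delicate step: it amounts to matching the growth rate of the adjoint torsion against that of the trace function at every end of every component, and the sought-after cancellation is exactly the kind of fine identity that need not survive in general. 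I would therefore organize the argument so that the ideal-point residues are isolated as explicit, computable local quantities, since establishing the conjecture comes down to proving that their total --- summed over all ideal points of all components --- vanishes.
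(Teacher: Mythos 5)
Your residue framework is the right machine---it is essentially how this paper obtains its positive results (there phrased via Jacobi's residue theorem: on the geometric component everything becomes rational in $y$, the fiber sum is rewritten as $\sum_{P_1-cP_2=0} R/(P_1-cP_2)'$, and the vanishing of your ideal-point residues is exactly the degree bound $\deg R\le \deg(P_1-cP_2)-2$, which holds because $|n|-1\ge 2$). But the step you defer, the vanishing of all residues away from the fiber, is not merely delicate: it is false, and so is the statement you are trying to prove. The paper's main theorem refutes the conjecture for the hyperbolic once-punctured torus bundles $M_n$ with $n\equiv 2\pmod 4$. There the character variety consists of two curves, the norm curve $C$ and an extra component $L$ given by $x=y=0$, on which the boundary eigenvalues satisfy $m^4\ell=1$; the hypotheses of the conjecture hold (all components are one-dimensional), yet for the slopes $\gamma=\mu^{4q+1}\lambda^q$ one has $\tr\rho(\mu^p\lambda^q)=m+m^{-1}$ and $\BT_{M_n,\gamma}=-\frac{1}{4}\left(2+\frac{n}{2}(m^2+m^{-2})\right)$, so $L$ contributes $-4/\left(2+(c^2-2)\frac{n}{2}\right)\ne 0$ to the fiber sum for generic $c$, while $C$ contributes $0$. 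No completion of your outline can exist.

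It is instructive to locate the failure inside your own bookkeeping, because it is not where you expect it. On $\overline{L}\simeq \BC P^1$ with coordinate $z$ (so $\tr_\gamma=\pm z$), the form $dz/\bigl(\BT_{M_n,\gamma}\,(z-c)\bigr)$ decays like $z^{-3}\,dz$ at the ideal point, so the ideal-point residue you worried about actually vanishes there; the nonzero residues sit at the two affine zeros of $\BT_{M_n,\gamma}$, i.e.\ at the poles of your $\eta$---precisely the ``finitely many non-generic affine points'' you proposed to dispose of by switching slopes. That disposal works only on norm curves, where the torsion is the derivative of the trace function up to a factor with no zeros or poles on generic fibers (in the paper, $\frac{f_n(y)-y}{1-f_{n-1}(y)}\BT_{M_n,\gamma}=P'/g$), so the ramification zeros of $d\tr_\gamma$ cancel the poles of $1/\BT_{M_n,\gamma}$ and $\eta$ is holomorphic there; on a component of type (ii) such as $L$, where one slope has constant trace, $d\tr_\gamma=dz$ never vanishes while $\BT_{M_n,\gamma}$ does, and those residues survive and spoil the identity. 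A further flaw: $\eta=d\tr_\gamma/\BT_{M,\gamma}$ is not slope-independent; with the change-of-curve formula as used here, $\BT_{M,\gamma}=\frac{d\log(m^p\ell^q)}{d\log \ell}\BT_{M,\lambda}$, the invariant form is $d\log E_\gamma/\BT_{M,\gamma}$, and yours differs by the slope-dependent factor $E_\gamma-E_\gamma^{-1}=\pm\sqrt{\tr_\gamma^2-4}$. What your argument can legitimately prove, once restricted to components on which every trace function is non-constant, is the paper's modified conjecture, which the paper verifies for all hyperbolic once-punctured torus bundles with tunnel number one.
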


It was shown in~\cite{Gang} that the vanishing identity holds for the figure eight knot exterior.
Then Yoon has shown that the vanishing identity holds for all hyperbolic twist knot exteriors in~\cite{Yoon} and
furthermore extended the result of~\cite{Yoon} to all hyperbolic two--bridge knots in~\cite{Yoon2} with respect to a meridian slope.

The purpose of this paper is to study the vanishing identity of the adjoint Reidemeister torsions 
for hyperbolic 3-manifolds different from knot exteriors.
We investigate the vanishing identity for hyperbolic once-punctured torus bundles with tunnel number one
according to the observation in~\cite{Gang}.
Gang--Kim--Yoon's observation provides the description of the adjoint Reidemeister torsion as a rational function by using defining polynomials of character varieties
and proves the vanishing identity by the Jacobi's residue theorem.
We will show infinitely many new supporting examples to the conjecture by Gang, Kim and Yoon
in once-punctured torus bundles over the circle.

\begin{theorem*}
  The vanishing identity of adjoint Reidemeister torsion holds for all hyperbolic once-punctured torus bundles with tunnel number one.
\end{theorem*}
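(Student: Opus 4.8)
The plan is to exploit the fibered structure of $M$ together with the global residue theorem, in the spirit of the observation in~\cite{Gang}. Write $M$ as the mapping torus of the once-punctured torus $T$ with monodromy $\phi$, so that $\pi_1(M)$ is the HNN extension of $\pi_1(T)=\langle a,b\rangle\cong F_2$ determined by the automorphism $\phi_*$, with stable letter $t$. First I would invoke the classification of tunnel number one hyperbolic once-punctured torus bundles to put $\phi$ into a normal form as an explicit, low-complexity element of the mapping class group $\cong\SL_2(\BZ)$ (a short syllable decomposition in the Dehn twists, as for the family $M_n$); this yields an explicit polynomial automorphism $\phi^*$ of the character variety $X(T)=\BC^3$ in the trace coordinates $(x,y,z)=(\tr a,\tr b,\tr ab)$. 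A representation extends from $T$ to $M$ exactly when its restriction is fixed, up to conjugacy, by $\phi^*$, so the part of $X(M)$ with irreducible restriction to the fiber is cut out, as an explicit affine curve, by the fixed-point equations of $\phi^*$ together with the two lifts (differing by sign) of $\rho(t)$. The boundary $\partial M$ is framed by the fiber boundary $\ell=[a,b]$ and the monodromy curve $\mu$ associated with $t$.

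Next I would separate the norm curves from the excluded components. On a component carrying a constant trace function the residue argument degenerates, and such components are precisely those where a single peripheral trace---typically $\tr[a,b]$---is pinned to a constant by the fixed-point equations; the remaining components, on which every $\tr_\gamma$ is non-constant, are the norm curves collected in $X^{\mathrm{nor}}(M)$. On each norm curve I would express the adjoint torsion through the fibered structure: since $T_{[\rho|_T]}X(T)\cong H^1(T;\mathrm{Ad}\,\rho)$ and $H^0(T;\mathrm{Ad}\,\rho)$ vanishes for irreducible $\rho$, the mapping-torus torsion is governed by $\det\bigl(I-\phi^*\bigr)$ acting on this tangent space, with the eigenvalue-$1$ direction (tangent to $X(M)$) removed and replaced by the slope datum $\gamma$. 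This realizes $\BT_{M,\gamma}$ as an explicit rational function on the norm curve. Crucially, Porti's change-of-curve formula gives $\BT_{M,\gamma_1}/\BT_{M,\gamma_2}=\partial\tr_{\gamma_1}/\partial\tr_{\gamma_2}$ along the curve, so the torsion $1$-form
\[
\omega \;=\; \frac{d\,\tr_\gamma}{\BT_{M,\gamma}}
\]
is independent of the slope $\gamma$ and defines an intrinsic meromorphic $1$-form on the smooth projective model $\overline{X_0}$ of each norm curve $X_0$.

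With $\omega$ in hand the vanishing becomes a residue computation. For fixed generic $z$, at a point $p\in\tr_\gamma^{-1}(z)$ the function $\tr_\gamma-z$ has a simple zero, and using $\tr_\gamma$ as a local coordinate one finds
\[
\mathrm{Res}_{p}\,\frac{\omega}{\tr_\gamma-z}\;=\;\frac{1}{\BT_{M,\gamma}(p)},
\]
so that $\sum_{[\rho]\in\tr_\gamma^{-1}(z)}1/\BT_{M,\gamma}$ equals the sum of the residues of $\omega/(\tr_\gamma-z)$ over the fiber. By the global (Jacobi) residue theorem the sum of all residues of this form on $\overline{X_0}$ is zero, hence the fiber sum equals minus the sum of residues at the remaining poles, namely the interior zeros of $\BT_{M,\gamma}$ and the ideal points where $\tr_\gamma=\infty$. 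I would then show these contributions vanish: that $\omega$ is holomorphic on the smooth interior locus (the zeros of $\BT_{M,\gamma}$ being cancelled by the ramification of $\tr_\gamma$, consistent with the slope-independence of $\omega$), and that at each ideal point the zero of $1/(\tr_\gamma-z)$ together with the order of $\omega$ forces the residue to be $0$. Summing over all norm curves then yields the modified conjecture.

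The main obstacle is this last step: controlling $\omega$ at the ideal points of the norm curves and proving the residues there vanish. Equivalently, one must pin down the exact rational expression for $\BT_{M,\gamma}$---including its zero/pole divisor relative to $\tr_\gamma$---and carry out the Culler--Shalen style analysis of the trace functions at infinity. This is exactly where the tunnel number one hypothesis does the work: it keeps the monodromy word, and hence the degrees and Newton polygons of the defining polynomials of the norm curves, simple and uniform enough that the ideal-point behavior can be analyzed once and for all across the whole family.
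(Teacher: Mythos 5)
Your outline follows the same high-level strategy as the paper (and as Gang--Kim--Yoon and Yoon): realize $\BT_{M,\gamma}$ through the monodromy action on $X(\pi_1(T))\cong\BC^3$ via Porti's mapping-torus formula, convert the fiber sum of $1/\BT_{M,\gamma}$ into a sum of residues, and kill it with a residue theorem. But there is a genuine gap, and it is exactly the step you yourself flag as ``the main obstacle'': proving that the residues at the ideal points vanish. That step is not a technical remainder to be outsourced to the tunnel-number-one hypothesis --- it is the entire mathematical content of the theorem, and no soft argument at infinity can succeed, because the claimed vanishing is \emph{false} on the extra component $L$ (there the paper computes the fiber sum to be $-4/\bigl(2+(c^2-2)\tfrac{n}{2}\bigr)\neq 0$) and false on torus-knot components, and nothing in your setup up to that point structurally distinguishes a norm curve from these. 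The paper carries the step out concretely: using the Baker--Petersen parametrization of the unique norm curve $C$ by $y$, it expresses $u,v,s$ (hence $g$, $h$ and $\tr_\gamma$) as explicit rational functions built from Chebyshev polynomials $f_k(y)$, reduces the fiber sum to $\sum_{P_1-cP_2=0} R/(P_1-cP_2)'$ after a coprimality analysis (writing $f_n(y)-y=Ad$, $1-f_{n-1}(y)=Bd$ with $(A,B)=1$ to identify denominators), and verifies the hypothesis $\deg R\le \deg(P_1-cP_2)-2$ of Jacobi's residue theorem by the degree count $\deg\frac{(y-f_n(y))^2}{1-f_{n-1}(y)}-\deg\frac{f_n(y)-y}{1-f_{n-1}(y)}=|n|-1\ge 2$; hyperbolicity ($|n|>2$) enters precisely here. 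Your proposal also misses a necessary case split: for $p$ odd, $E_\gamma=m^p\ell^q$ is not rational in $(m^2,y)$, and the paper must pass to $Q=(\tr\rho(\mu^p\lambda^q))^2$ and exploit $(Q-2)^2-4=(s^2-4)(g^2-h^2)^2$ before the same degree argument applies.

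There is also a concrete error en route. Porti's change-of-curve formula, as used in the paper, is $\BT_{M,\gamma}=\frac{d\log(m^p\ell^q)}{d\log \ell}\,\BT_{M,\lambda}$ --- a ratio of logarithmic derivatives of \emph{eigenvalue} functions, not $\partial\tr_{\gamma_1}/\partial\tr_{\gamma_2}$. Since $\tr_\gamma=E_\gamma+E_\gamma^{-1}$ gives $d\tr_\gamma=(E_\gamma-E_\gamma^{-1})\,d\log E_\gamma$, your $1$-form satisfies $\omega=d\tr_\gamma/\BT_{M,\gamma}=(E_\gamma-E_\gamma^{-1})\,d\log E_\lambda/\BT_{M,\lambda}$, which is \emph{not} slope-independent (the invariant form is $d\log E_\gamma/\BT_{M,\gamma}$). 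Your pointwise residue identity remains valid for a fixed $\gamma$, but the claim that interior zeros of $\BT_{M,\gamma}$ contribute nothing ``consistent with the slope-independence of $\omega$'' collapses as written and would require a separate argument (in the paper this is absorbed by genericity of $c$ together with the explicit rational expressions). A smaller inaccuracy: on the excluded component $L$ the constant peripheral trace is $\tr\rho(\mu^4\lambda)$ (from $m^4\ell=1$), not $\tr[a,b]=\tr_\lambda$, so your proposed criterion for detecting non-norm components is also off; the paper instead invokes Baker--Petersen's component classification together with the Boyer--Zhang trichotomy. In sum, the skeleton is right, but the proposal defers precisely the estimate that constitutes the proof.
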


We also show that the inverse sum of adjoint Reidemeister torsions equals $\pm 2$ for all torus knot exteriors $M_{r, s}$ of type $(r, s)$ and all slopes $\gamma$.
\begin{theorem*}[Theorem~\ref{thm:sum_torusknot_general}]
  For any slope $\gamma = \mu^p \lambda^q$ and a generic $c \in \BC$, it holds that
  \[\sum_{[\rho] \in \tr_{\gamma}^{-1}(c)} \frac{1}{\BT_{M_{r, s}, \gamma}(\rho)} = \pm 2.\]
\end{theorem*}
\begin{corollary*}[Corollary~\ref{cor:nonvanish_torusknot}]
  The vanishing identity of adjoint Reidemeister torsion does not hold for any torus knot exteriors and any slopes.
\end{corollary*}

\section{Preliminaries}
\subsection{The fundamental groups of once-punctured torus bundles}
We almost follow the convention and notation used in~\cite{Baker-Petersen}.
Let $\beta$ and $\beta'$ denote curves on the once-punctured torus $T$ transversally intersecting once and
$\tau_a$ be the right-handed Dehn twist along the curve $a$.
We can regard the once-punctured torus bundles with tunnel number one, up to mirror images,
as a one-parameter family $\{M_n\}_{n \in \BZ}$ of the mapping tori of the homeomorphims
$\phi_n = \tau_{\beta'} \tau_\beta^{n+2}$ on $T$, that is,
\[M_n = T \times [0, 1] / (x, 0) \sim (\phi_n(x), 1).\]
If $n$ satisfies $|n| > 2$, then the once-punctured torus bundle $M_n$ is hyperbolic.

By abuse of notation, we use the same letters $\beta$ and $\beta'$ for the homotopy classes in $\pi_1(T, *)$ with the base point $* \in \partial T$.
According to~\cite{Baker-Petersen} the induced isomorphism $(\phi_n)_*$ maps $\beta$ and $\beta'$ 
to $\beta\beta'$ and $\beta'(\beta\beta')^{-n-2}$ respectively.
Then we have the following presentation of $\pi_1(M_n)$:
\begin{align*}
  \pi_1(M_n)
  &=  \la \beta', \beta, \mu \mid \beta' (\beta \beta')^{-(n+2)} = \mu \beta' \mu^{-1}, \,  \beta \beta' = \mu \beta \mu^{-1} \ra \\
  &= \la \alpha, \beta \mid \beta^{-n} = \alpha^{-1} \beta \alpha^2 \beta \alpha^{-1} \ra
\end{align*}
where $\mu$ stands for the homotopy class of an embedded curve in $\partial M_n$ transversally intersecting each fiber once and we put $\alpha = \beta^{-1}\mu$ in the second equality.

We call a curve representing $\mu = \beta \alpha$ {\it the meridian} of $M_n$.
Let $\lambda$ be the homotopy class represented by the boundary of a fiber corresponding $\partial T$.
Giving $\partial T$ the boundary orientation as $\partial T = \beta'\beta\beta'^{-1}\beta^{-1}$,
we have a pair of meridian-longitude such that
$\mu = \beta \alpha$ and $\lambda = \beta'\beta\beta'^{-1}\beta^{-1} = \alpha \beta \alpha^{-1} \beta \alpha \beta^{-1} \alpha^{-1} \beta^{-1}$,
which follows from $\beta'=\beta^{-1}\mu\beta\mu^{-1}=\alpha\beta\alpha^{-1}\beta^{-1}$.

\subsection{Review on the character varieties for once-punctured torus bundles}
Let $\rho$ denote a homomorphism from $\pi_1(M_n)$ into $\SL_2(\BC)$.
We call $\rho$ {\it an $\SL_2(\BC)$-representation} of $\pi_1(M_n)$.
We will observe $\SL_2(\BC)$-representations $\rho$ up to conjugation. 
This means that we will often replace an $\SL_2(\BC)$-representation $\rho$ by the composition with an inner automorphism of $\SL_2(\BC)$, that is $A \rho A^{-1}$ for some $A \in \SL_2(\BC)$. 

An $\SL_2(\BC)$-representation $\rho$ is referred to as being {\it irreducible}
if the standard action of $\rho(\pi_1(M_n))$ on $\BC^2$ has no nontrivial invariant subspace of $\BC^2$,
in other words, the $\SL_2(\BC)$-matrices $\rho(\alpha)$ and $\rho(\beta)$ are not conjugate to
upper triangular matrices simultaneously.
It is known that we can think of the set of conjugacy classes of irreducible $\SL_2(\BC)$-representations
as an algebraic set called {\it the character variety} (precisely, its Zariski closure is called the character variety for irreducible representations).
We will denote the character variety by $X(\pi_1(M_n))$.
The following functions on $X(\pi_1(M_n))$:
\[
x = \tr\rho(\alpha), \quad y = \tr\rho(\beta), \quad z = \tr\rho(\alpha\beta)
\]
give the structure of an algebraic set in $\BC^3$
when the fundamental group $\pi_1(M_n)$ are generated by two generators $\alpha$ and $\beta$
(for details see~\cite[Proposition~5.20]{Baker-Petersen}).
This means that the variables $x$, $y$ and $z$ play a role as a coordinate $(x, y, z)$ of the character variety $X(\pi_1(M_n))$.

We will review more details on the algebraic structure of the character varieties for once-punctured torus bundles with tunnel number one.
Let $f_n(y)$ be the Chebyshev polynomials defined by $f_0(y)=0$, $f_1(y)=1$ and $f_{n+1}(y) = y f_n(y) - f_{n-1}(y)$ for $n \in \BZ$.
Note that if $y=b+b^{-1}$ and $b \not= \pm 1$, then $f_n(y)$ can be expressed as 
\[
f_n(y) = \frac{b^n - b^{-n}}{b-b^{-1}}.
\]
According to~\cite[Proposition~5.23]{Baker-Petersen}
$\rho: \pi_1(M_n) \to \SL_2(\BC)$ is an irreducible representation if and only if
the above variables $x$, $y$ and $z$ satisfy 
\begin{align}
x^2 -1 + f_{n-1}(y) &=0, \label{eq:phi1_BP} \\
xz -y +f_{n}(y) &=0, \label{eq:phi2_BP}\\
x(f_{n+1}(y)-1) - z f_{n}(y) &=0. \label{eq:phi3_BP}
\end{align}
There are 3 cases on the parametrization of $X(\pi_1(M_n))$ (for details we refer to~\cite[Propositions~5.23 \&~5.32 and Remark~5.33]{Baker-Petersen}):
\begin{enumerate} 
\item \label{case:geometric}
  $x = \ve \sqrt{1-f_{n-1}(y)}, \, z = \ve \frac{y - f_n(y)}{\sqrt{1-f_{n-1}(y)}}$ where $\ve = \pm 1$ and $f_{n-1}(y) \not=1$.
\item $x=z=0$ and $y \in \CR_{n-2}$. Here $\CR_{n-2} =\{2\cos(2\pi k /(n-2)) \,|\,  k =0, \ldots, n-3\}$
  \label{case:fib}
  If we write $m^{\pm 1}$ and $\ell^{\pm 1}$ for the eigenvalues of $\rho(\mu)$ and $\rho(\lambda)$ respectively, then $m^2 = -1$ and $\ell =1$, so  $\tr \rho(\mu^p \lambda^q) \in \{\pm 2, 0\}$. 
\item  \label{case:extra}
  $x=y=0$ (this occurs only when $n \equiv 2 \pmod{4}$).
    If we write $m^{\pm 1}$ and $\ell^{\pm 1}$ for the eigenvalues of $\rho(\mu)$ and $\rho(\lambda)$ respectively, then it holds that $m^{4}\ell=1$. 
\end{enumerate}
The cases~\eqref{case:geometric} and \eqref{case:fib} form a parametrization of a hyperelliptic curve in $X(\pi_1(M_n))$.
We denote by $D$ this hyperelliptic curve.
The points in \eqref{case:fib} are given by $f_{n-1}(y) = 1$. We will see the details in Subsection~\ref{subsec:fac_chebyshev}.
Moreover the coordinate of a discrete faithful representation of $\pi_1(M_n)$ appears in the case~\eqref{case:geometric}.

It is also shown in~\cite[Theorem~5.1]{Baker-Petersen} that
the character variety $X(\pi_1(M_n))$ consists of the unique component $D$ when $n \not \equiv 2 \pmod{4}$ and
$X(\pi_1(M_n))$ consists of two components $D$ and $L$,
where $L$ is parametrized by the case~\eqref{case:extra} when $n \equiv 2 \pmod{4}$.
We observe the inverse sum of adjoint Reidemeister torsions on the components $D$ and $L$
for the conjecture by Gang--Kim--Yoon.
We will call the component $D$ the {\it geometric} component and $L$ the {\it extra} component.

\subsection{Factorization of Chebyshev polynomials}
\label{subsec:fac_chebyshev}
We review the details on the defining polynomials of $X(\pi_1(M_n))$ to investigate functions on it.
This section explains factorization of Chebyshev concerning the defining polynomials of $X(\pi_1(M_n))$.
\begin{definition}[Definition~4.8 in~\cite{Baker-Petersen}]
  Set
  \begin{align*}
    h_n(u) &= \left\{
    \begin{array}{rl}
      f_{m-1}(u) & \hbox{if $n=2m$} \\
      f_m(u) + f_{m-1}(u) & \hbox{if $n=2m+1$},
    \end{array}\right. \\
    j_n(u) &= \left\{
    \begin{array}{rl}
      f_m (u) & \hbox{if $n=2m$} \\
      f_{m+1} (u) + f_m (u) & \hbox{if $n=2m+1$},
    \end{array}\right. \\
    k_n(u) &= \left\{
    \begin{array}{rl}
      f_{m+2}(u) - f_m(u) & \hbox{if $n=2m$} \\
      f_{m+2}(u) - f_{m+1}(u) & \hbox{if $n=2m+1$},
    \end{array}\right. \\
    \ell_n(u) &= \left\{    
    \begin{array}{rl}
      f_{m+1}(u) - f_{m-1}(u) & \hbox{if $n=2m$} \\
      f_{m+1}(u) - f_m (u) & \hbox{if $n=2m+1$.}
    \end{array}\right. 
  \end{align*}  
\end{definition}
There are useful factorizations as follows.
\begin{lemma}[Lemma~4.9 in~\cite{Baker-Petersen}]
  For all $n$,
  \begin{align*}
    f_n(u) &= j_n(u)\ell_n(u), \\
    f_{n+1}(u) -1 &= j_n(u) k_n(u), \\
    f_{n-1}(u) -1 &= h_n(u) \ell_n(u), \\
    f_n(u) - u &= h_n(u) k_n(u),
  \end{align*}
  whence $(f_{n+1}(u)-1)(f_{n-1}(u)-1)=f_n(u)(f_n(u)-u)$.
\end{lemma}
We also review the common factors among $h_n(u)$ $j_n(u)$, $k_n(u)$ and $\ell_n(u)$.
\begin{lemma}[Lemma~4.13 in~\cite{Baker-Petersen}]
  \label{lemma:gcm_jlhk}
  The symbol $(a, b)$ denotes the ideal generated by polynomials $a(u)$ and $b(u)$ in $\BC[u]$.
  Then we have the following:
  \begin{enumerate}
  \item For all $n$, $(h_n, j_n)=(1)$.
  \item For all $n$, $\gcd(k_n, \ell_n)=(1)$.
  \item\label{item:corrected}
    If $n \not \equiv 2 \pmod{8}$, then $(h_n, k_n)=(1)$. Otherwise, $(h_n, k_n)=(u^2-2)$.
  \item If $n \equiv 0 \pmod{4}$, then $(j_n, k_n)=(u)$. Otherwise, $(j_n(u), k_n(u))=(1)$.
  \item If $n \equiv 2 \pmod{4}$, then $(h_n, \ell_n)=(u)$. Otherwise, $(h_n, \ell_n)=(1)$.
  \end{enumerate}
\end{lemma}
\begin{remark}
  We modified the assumption of~\eqref{item:corrected} from $n \not =2$ in~\cite[Lemma~4.13~$(3)$]{Baker-Petersen} to $n \not \equiv 2 \pmod{8}$.
\end{remark}
\begin{remark}
  When $n \equiv 2 \pmod{8}$, it also holds that $(u \pm \sqrt{2})^2 \nmid h_n(u)$ and $(u \pm \sqrt{2})^2 \nmid k_n(u)$.
\end{remark}

We can rewrite the defining equations~\eqref{eq:phi1_BP}, \eqref{eq:phi2_BP} and \eqref{eq:phi3_BP} of $X(\pi_1(M_n))$ as follows.
\begin{align}
x^2 + h_n(y)\ell_n(y) &=0, \label{eq:phi1_hell} \\
xz + h_n(y)k_n(y) &=0, \label{eq:phi2_hk}\\
j_n(y) (x k_n(y) - z \ell_n(y)) &=0. \label{eq:phi3_jkell}
\end{align}
Define $\hat \ell_n(y)$ and $\hat h_n(y)$ such that 
$\hat \ell_n(y) = \ell_n(y)$ and $\hat h_n(y) = h_n(y)$ when $n \not \equiv 2 \pmod{4}$, $y \hat \ell_n(y) = \ell_n(y)$ and $y \hat h_n(y) = h_n(y)$ when $n \equiv 2 \pmod{4}$.
The parametrization of $X(\pi_1(M_n))$ can be written as
\begin{enumerate} 
\item 
  $\displaystyle
  \big\{
  \big( \ve \sqrt{-h_n(y)\ell_n(y)}, y, \ve k_n(y) \sqrt{-\frac{\hat h_n(y)}{\hat \ell_n(y)}} \big)
  \,|\, \ve = \pm 1, \hat \ell (y) \not= 0, \hat h_n(y) \not= 0
  \big\} \subset D$.
  Note that $h_n(y) / \ell_n(y) = \hat h_n(y) / \hat \ell_n(y)$.
\item $\{(0, y, 0) \,|\, \hat h_n(y)=0\} \subset D$.
  The zeros of $\hat h_n(y)$ form $\CR_{n-2}$. 
\item 
  $\{ (0, 0, z)\,|\, z \in \BC \}$, which gives the line $L$ when $n \equiv 2 \pmod{4}$.
\end{enumerate}
Combining the above cases $(1)$ and $(2)$, we have the parametrization of the curve component $D$ in~\cite[Definition~5.31]{Baker-Petersen}, that is, $D$ is parametrized on $\BC^3$ by 
\[
\big\{
  \big( \ve \sqrt{-h_n(y)\ell_n(y)}, y, \ve k_n(y) \sqrt{-\frac{\hat h_n(y)}{\hat \ell_n(y)}} \big)
  \,|\, \ve = \pm 1, \hat \ell (y) \not= 0
  \big\}
\] 

We put a list of remarks needed in this paper.
\begin{remark}
  \label{rem:param_X}
  \begin{enumerate}
  \item The polynomials $\hat \ell_n(y)$ and $\hat h_n(y)$ have no common factors and no factor $y$, that is, $(\hat \ell_n, \hat h_n)=\BC[u]$, $\hat \ell_n(0) \not = 0$ and $\hat h_n(0) \not = 0$.
  \item It holds that $\hat \ell_n(y) \not= 0$ on $X(\pi_1(M_n))$.
    If $\hat \ell_n(y) = 0$, then we have $f_{n-1}(y)-1 = h_n(y) \ell_n(y) =0$ and $f_n(y)-y=k_n(y)h_n(y) \not= 0$ since neither $k_n(y)$ nor $h_n(y)$ has any common roots with $\hat \ell_n(y)$. The defining equations~\eqref{eq:phi1_BP} and~\eqref{eq:phi2_BP} do not hold simultaneously.  
  \item \label{item:intersectDL}
    In the case of $n \equiv 2 \pmod{4}$, the curve $D$ and the line $L$ in $X(\pi_1(M_n))$ intersect in the two points $(0, 0, \pm\sqrt{1/2 - 1/n})$ according to~\cite[Proposition~5.35]{Baker-Petersen}.
    Namely $X(\pi_1(M_n))$ is connected.
    \item If $n \not= 0$, then the degree of $f_n(u)$ is $|n|-1$.
  \end{enumerate}
\end{remark}

\subsection{General formula of the adjoint Reidemeister torsion for once-punctured torus bundles}
\label{subsec:general_formula_torsion}
We can find a general formula of the adjoint Reidemeister torsion for hyperbolic once-punctured torus bundles in~\cite[Section~4.5]{Porti}.
We review the general formula in Porti's book and then apply it to our situation.
For the details, we refer readers to~\cite[Section~4.5]{Porti}.

Let $M$ be a once-punctured torus bundle over the circle with monodromy $\phi$ and $T$ the fiber in $M$.
The character variety $X(\pi_1(M))$ is an algebraic subset in $\BC^3$ under the coordinate functions
\[x_1 = \tr\rho(g), \quad x_2 = \tr\rho(h), \quad x_3 = \tr\rho(gh)\]
where $g$ and $h$ are a pair of generators for $\pi_1(T)$.
This is derived from the fact that the character variety of the free group $\pi_1(T)=\la g, h \ra$ is isomorphic to
$\BC^3$ by corresponding the traces for $g$, $h$, $gh$ to
the coordinate $(x_1, x_2, x_3) \in \BC^3$.
The restriction map induces the homomorphism $i: \pi_1(T) \to \pi_1(M)$
and then the map $r: X(\pi_1(M_n)) \to X(\pi_1(T)) \simeq \BC^3$ is induced by the pull-back by $i$.
We can think of the image of $r$ as the fixed point set by the action of the monodromy $\phi$ on $X(\pi_1(T)) \simeq \BC^3$.
It has been shown in~\cite[Proposition~4.19]{Porti} that the adjoint Reidemeister torsion is determined by the eigenvalues
of the action induced by the monodromy $\phi$ on the tangent space of $X(\pi_1(T))$ at $r([\rho])$.
Here $[\rho]$ is the conjugacy class of an $\SL_2(\BC)$-representation $\rho$ of $\pi_1(M)$ and
the tangent space of $X(\pi_1(T))$ at $r([\rho])$ is the vector space $\BC^3$.
The action of the monodromy $\phi$ on $X(\pi_1(T)) \simeq \BC^3$ is given
by $2 \times 2$-matrix $A_\phi$ which represents the induced action of $\phi$ on $H_1(T;\BZ) \simeq \BZ^2$ as follows:
\begin{itemize}
\item the matrix $A_\phi$ is conjugate to $\pm R^{a_1} L^{a_2} \cdots R^{a_{k-1}}L^{a_k}$ ($a_1, \ldots, a_k \in \BZ$) where
  $L=\begin{bmatrix} 1 & 0 \\ 1 & 1 \end{bmatrix}$ and $R=\begin{bmatrix} 1 & 1 \\ 0 & 1 \end{bmatrix}$.
\item Let $P_i\, (i=1, 2, 3)$ be a polynomial in the variables $x_1$, $x_2$ and $x_3$ with coefficient $\BZ$.
  The actions corresponding to $L$ and $R$ on the coordinates $(P_1, P_2, P_3)$ are defined by
  \begin{align}
    (P_1, P_2, P_3)^L &= (P_3, P_2, P_2P_3-P_1), \label{eq:actionL}\\
    (P_1, P_2, P_3)^R &= (P_1, P_3, P_1P_3-P_2) \label{eq:actionR}
  \end{align}
  respectively.
\end{itemize}
We write $(P_1^W, P_2^W, P_3^W)$ for the action $(P_1, P_2, P_3)^W$ of a word $W$ in $L$ and $R$.
The polynomials $P_i^{A_\phi}$ $(i=1, 2, 3)$ are defined by the action $(P_1, P_2, P_3)^{A_\phi}$ with the initial condition $(P_1, P_2, P_3)^{\text{Id}} = (x_1, x_2, x_3)$. 
The action of the monodromy $\phi$ on the tangent space $T_{r([\rho])} X(\pi_1(T))$ is presented by
the Jacobian matrix $[\partial P_i^{A_\phi} / \partial x_j]_{i, j}$.
By~\cite[Lemma~4.21]{Porti}, the adjoint Reidemeister torsion $\BT_{M, \lambda}(\rho)$ is expressed as
\begin{equation}
  \label{eq:adT_general}
  \BT_{M , \lambda}(\rho) = 3 - \tr \left[ \frac{\partial P_i^{A_\phi}}{\partial x_j} \right]_{i, j}
\end{equation}
where $x_i \, (i=1, 2, 3)$ is evaluated at $r([\rho])$.

\section{Adjoint Reidemeister torsion}
We apply the general formula in Subsection~\ref{subsec:general_formula_torsion}
to hyperbolic once-punctured torus bundles with tunnel number one.
The monodromy $\phi_n = \tau_{\beta'} \tau_{\beta}^{n+2}$ induces the action given by $LR^{-(n+2)}$
on $H_1(T;\BZ)$.
Let $k$ be a non--negative integer.
By Eq.~\eqref{eq:actionR}, there are recurrence relations
\begin{align*}
(P_1^{LR^k} , P_2^{LR^k} , P_3^{LR^k} )
&= (P_1^{LR^{k-1}}, P_3^{LR^{k-1}}, P_1^{LR^{k-1}} P_3^{LR^{k-1}} - P_2^{LR^{k-1}})\\
\intertext{and}
(P_1^{LR^{-k}} , P_2^{LR^{-k}} , P_3^{LR^{-k}} ) 
&= (P_1^{LR^{-(k-1)}}, P_1^{LR^{-(k-1)}} P_2^{LR^{-(k-1)}} - P_3^{LR^{-(k-1)}}, P_2^{LR^{-(k-1)}}).
\end{align*}
The polynomials $P_1^{LR^{\pm k}}$, $P_2^{LR^{\pm k}}$ and $P_3^{LR^{\pm k}}$ satisfy
\begin{gather}
  P_1^{LR^k} = P_1^{L}, \quad
  P_2^{LR^k} = P_3^{LR^{k-1}}, \quad
  P_3^{LR^k} = P_1^{L} P_3^{LR^{k-1}} - P_3^{LR^{k-2}}  \label{eq:P_positive_k}\\
  \intertext{and}
  P_1^{LR^{-k}} = P_1^{L}, \quad
  P_2^{LR^{-k}} = P_1^{L} P_2^{LR^{-(k-1)}} - P_2^{LR^{-(k-2)}}, \quad
  P_3^{LR^{-k}} = P_2^{LR^{-(k-1)}}. \label{eq:P_negative_k}
\end{gather}
Both $P_3^{LR^k}$ and $P_2^{LR^{-k}}$ satisfy the same recurrence relation as the Chebyshev polynomial $f_k(P_1^L)$.
Note that any linear combination of Chebyshev polynomials also satisfies the same recurrence relation.
We regard the initial conditions of $P_3^{LR^k}$ and $P_2^{LR^{-k}}$ for $k=0, 1$ as
\begin{align*}
  P_3^L &= f_1(P_1^L)P_3^L, & P_3^{LR} &= P_1^{L} P_3^{L} - P_2^{L} = f_2(P_1^L) P_3^L - f_1(P_1^L)P_2^L \\
  \intertext{and}
  P_2^L &= f_1(P_1^L)P_2^L, & P_2^{LR^{-1}} &= P_1^{L} P_2^{L} - P_3^{L} = f_2(P_1^{L}) P_2^{L} - f_1(P_1^L)P_3^{L}
\end{align*}
since $f_2(y)=y$, $f_1(y)=1$, $f_0(y)=0$.
According to the initial conditions of $P_3^{LR^k}$ and $P_2^{LR^{-k}}$,
the polynomials $P_3^{LR^k}$ and $P_2^{LR^{-k}}$ in $P_1^L$ can be expressed as
\begin{align*}
  P_3^{LR^k} &= f_{k+1}(P_1^L)P_3^L - f_k(P_1^L) P_2^{L}, \\
  P_2^{LR^{-k}} &= f_{k+1}(P_1^L)P_2^L - f_k(P_1^L) P_3^{L}.
\end{align*}
By Eq.~\eqref{eq:actionL} the triple $(P_1^L, P_2^L, P_3^L)$ of polynomials is given by 
\[(x_1, x_2, x_3)^L = (x_3, x_2, x_2 x_3-x_1)\]
and then we get 
\begin{align}
  P_3^{LR^{k}}
  &=  (x_2 x_3 - x_1)  f_{k+1}(x_3)- x_2 f_k(x_3) = x_2 f_{k+2}(x_3)  - x_1  f_{k+1}(x_3), \label{eq:P3}\\
  P_2^{LR^{-k}}
  &= x_2 f_{k+1}(x_3) - (x_2 x_3 - x_1) f_k(x_3)  = -x_2 f_{k-1}(x_3) + x_1  f_k(x_3). \label{eq:P2}
\end{align}
Summarizing, the action of $LR^{\pm k}$ yields $(P_1^{LR^{\pm k}}, P_2^{LR^{\pm k}}, P_3^{LR^{\pm k}})$ as follows.
\begin{lemma}
  \label{lem:P_i}
  For a non-negative integer $k$, the polynomials $P_i^{LR^{\pm k}}$ $(i=1, 2, 3)$ are given by 
  \[
  (P_1^{LR^{\pm k}}, P_2^{LR^{\pm k}}, P_3^{LR^{\pm k}})
  = (x_3, x_2 f_{\pm k + 1}(x_3) - x_1 f_{\pm k}(x_3), x_2 f_{\pm k + 2}(x_3) - x_1 f_{\pm k +1}(x_3)).
  \]
\end{lemma}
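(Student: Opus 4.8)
The plan is to prove both families of identities (the $+k$ case and the $-k$ case) by induction on the non-negative integer $k$, reusing the recurrences \eqref{eq:P_positive_k} and \eqref{eq:P_negative_k} together with the closed forms \eqref{eq:P3} and \eqref{eq:P2} obtained above, and finally to merge the two signs into the single line in the statement by means of the symmetry of the Chebyshev polynomials.

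First I would set up the base case: applying \eqref{eq:actionL} to the initial triple $(x_1,x_2,x_3)$ gives $(P_1^{L}, P_2^{L}, P_3^{L}) = (x_3, x_2, x_2x_3 - x_1)$, which agrees with the claimed formulas at $k=0$ because $f_0 = 0$, $f_1 = 1$ and $f_2(x_3) = x_3$. For the first coordinate I would note that both $R$ and $R^{-1}$ fix the first slot: this is immediate from \eqref{eq:actionR} and from the inverse action $(P_1,P_2,P_3)^{R^{-1}} = (P_1, P_1P_2 - P_3, P_2)$ encoded in \eqref{eq:P_negative_k}. Hence $P_1^{LR^{\pm k}} = P_1^{L} = x_3$ for every $k \ge 0$.

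For the remaining coordinates in the positive case, \eqref{eq:P_positive_k} shows that the sequence $(P_3^{LR^{k}})_k$ satisfies the Chebyshev three-term recurrence in the variable $x_3 = P_1^{L}$, so it must be a linear combination of $f_{k}(x_3)$ and $f_{k+1}(x_3)$ over $\BC[x_1,x_2,x_3]$; matching the two initial values $P_3^{L}$ and $P_3^{LR}$ pins down the combination and yields \eqref{eq:P3}, and the relation $P_2^{LR^{k}} = P_3^{LR^{k-1}}$ then shifts the index to give $P_2^{LR^{k}}$. The negative case runs symmetrically from \eqref{eq:P_negative_k}: I would derive $P_2^{LR^{-k}}$ as in \eqref{eq:P2} and then read off $P_3^{LR^{-k}} = P_2^{LR^{-(k-1)}}$.

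The only point requiring care is the index bookkeeping, and it is not a genuine obstacle. To collapse the $+k$ and $-k$ formulas into the single expression in the statement I would invoke the identity $f_{-m} = -f_m$, which follows at once from $f_0 = 0$, $f_1 = 1$ and running the defining recurrence backwards; this converts the negative-$k$ outputs $-x_2 f_{k-1}(x_3) + x_1 f_k(x_3)$ into $x_2 f_{-k+1}(x_3) - x_1 f_{-k}(x_3)$, exactly the $-k$ specialization of the stated formula. All intermediate simplifications reduce to the single Chebyshev relation $x_3 f_{k+1}(x_3) - f_k(x_3) = f_{k+2}(x_3)$, so the verification is routine once the initial conditions are recorded correctly.
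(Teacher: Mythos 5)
Your proof is correct and follows essentially the same route as the paper: both arguments reduce the positive and negative cases to the Chebyshev three-term recurrence satisfied by $P_3^{LR^{k}}$ and $P_2^{LR^{-k}}$, fix the linear combination by the initial values $(P_1^L,P_2^L,P_3^L)=(x_3,x_2,x_2x_3-x_1)$, and then merge the two signs via $f_{-m}=-f_m$. The only difference is presentational—you spell out the induction and base case that the paper relegates to the discussion preceding the lemma.
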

\begin{proof}
  The polynomials $P_i^{LR^k}$ $(i=1, 2, 3)$ follow by Eqs.~\eqref{eq:P_positive_k} and~\eqref{eq:P3}.
  Since $f_{-n}(y)$ satisfies $f_{-n}(y) = - f_n(y)$ for any integer $n$, 
  it follows from Eqs.~\eqref{eq:P_negative_k} and~\eqref{eq:P2} that
  \begin{align*}
    P_1^{LR^{-k}} &= x_3, \\
    P_2^{LR^{-k}} &= -x_2 f_{k-1}(x_3) + x_1f_k(x_3) =  x_2 f_{-k+1}(x_3) - x_1f_{-k}(x_3),\\
    P_3^{LR^{-k}} &= P_2^{LR^{-(k-1)}} = x_2 f_{-k+2}(x_3) - x_1f_{-k+1}(x_3).
  \end{align*}
\end{proof}

Applying the general formula~\eqref{eq:adT_general} to $A_{\phi_n}=LR^{-(n+2)}$ together with Lemma~\ref{lem:P_i},
we obtain the adjoint Reidemeister torsion for the once-punctured torus bundle $M_n$ as follows.
\begin{lemma}
  \label{lem:adT}
  The adjoint Reidemeister torsion $\BT_{M_n, \lambda}(\rho)$ is expressed as
  \[
    \BT_{M_n, \lambda}(\rho)
    =3 + f_{n+1}(x_3) + x_2 \frac{d f_{n}(x_3)}{d x_3}   - x_1 \frac{d f_{n+1}(x_3)}{d x_3}.
  \]  
\end{lemma}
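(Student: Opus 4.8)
The plan is to feed the monodromy action $A_{\phi_n} = LR^{-(n+2)}$ directly into the general formula~\eqref{eq:adT_general}, using Lemma~\ref{lem:P_i} to produce the required polynomials in closed form. Concretely, I would specialize Lemma~\ref{lem:P_i} to the case $\pm k = -(n+2)$, reading off
\begin{align*}
  P_1^{A_{\phi_n}} &= x_3, \\
  P_2^{A_{\phi_n}} &= x_2 f_{-(n+1)}(x_3) - x_1 f_{-(n+2)}(x_3), \\
  P_3^{A_{\phi_n}} &= x_2 f_{-n}(x_3) - x_1 f_{-(n+1)}(x_3),
\end{align*}
and then apply the parity relation $f_{-m}(y) = -f_m(y)$ to rewrite each index as a nonnegative one, obtaining $P_2^{A_{\phi_n}} = -x_2 f_{n+1}(x_3) + x_1 f_{n+2}(x_3)$ and $P_3^{A_{\phi_n}} = -x_2 f_n(x_3) + x_1 f_{n+1}(x_3)$.

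Since~\eqref{eq:adT_general} only requires the trace of the Jacobian $[\partial P_i^{A_{\phi_n}}/\partial x_j]_{i,j}$, the next step is to compute just the three diagonal entries and discard the off-diagonal ones entirely. The three relevant simplifications are that $P_1^{A_{\phi_n}} = x_3$ carries no $x_1$-dependence, forcing $\partial P_1^{A_{\phi_n}}/\partial x_1 = 0$; that $P_2^{A_{\phi_n}}$ is linear in $x_2$ with coefficient $-f_{n+1}(x_3)$, giving $\partial P_2^{A_{\phi_n}}/\partial x_2 = -f_{n+1}(x_3)$; and that $P_3^{A_{\phi_n}}$ depends on $x_3$ solely through the Chebyshev polynomials, so that $\partial P_3^{A_{\phi_n}}/\partial x_3 = -x_2 \frac{d f_n(x_3)}{d x_3} + x_1 \frac{d f_{n+1}(x_3)}{d x_3}$.

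Summing these diagonal contributions and substituting into $\BT_{M_n, \lambda} = 3 - \tr[\partial P_i^{A_{\phi_n}}/\partial x_j]_{i,j}$ then yields the claimed expression at once, as the two minus signs from the trace cancel against those introduced by the parity reduction. The argument is purely computational, so there is no substantive obstacle; the only points demanding care are the signs generated by the reduction $f_{-m} = -f_m$ and the single-step index shifts among $f_n$, $f_{n+1}$, $f_{n+2}$, which must align precisely so that the two surviving trace terms differentiate $f_n$ and $f_{n+1}$ rather than their neighbors.
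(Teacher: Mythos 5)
Your proposal is correct and takes essentially the same route as the paper's proof: it likewise specializes Lemma~\ref{lem:P_i} to $A_{\phi_n}=LR^{-(n+2)}$, reduces the negative indices via $f_{-m}(y)=-f_m(y)$, and substitutes only the three diagonal entries of the Jacobian into the general formula~\eqref{eq:adT_general}. The sign bookkeeping you single out is exactly the content of the paper's computation, so nothing further is needed.
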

\begin{proof}
  By Lemma~\ref{lem:P_i}, the polynomials $P_i^{LR^{-(n+2)}}$ $(i=1, 2, 3)$ turn out to be
  \begin{align*}
    P_1^{LR^{-(n+2)}} &= x_3,\\
    P_2^{LR^{-(n+2)}} &= x_2 f_{-(n+1)}(x_3) -x_1f_{-(n+2)}(x_3) = -x_2 f_{n+1}(x_3) +x_1f_{n+2}(x_3),\\
    P_3^{LR^{-(n+2)}} &= x_2 f_{-n}(x_3) -x_1f_{-(n+1)}(x_3)) = -x_2 f_n(x_3) + x_1f_{n+1}(x_3).
  \end{align*}
  It follows from the general formula~\eqref{eq:adT_general} that
\begin{align*}
  \BT_{M_n, \lambda}(\rho)
  &= 3 - \frac{\partial P_1^{LR^{-(n+2)}}}{\partial x_1} - \frac{\partial P_2^{LR^{-(n+2)}}}{\partial x_2} - \frac{\partial P_3^{LR^{-(n+2)}}}{\partial x_3} \\
  &= 3 - \left( -f_{n+1}(x_3) - x_2 \frac{d f_n(x_3)}{d x_3}   + x_1  \frac{d f_{n+1}(x_3)}{d x_3} \right) \\
  &= 3 + f_{n+1}(x_3) + x_2 \frac{d f_{n}(x_3)}{d x_3} - x_1 \frac{d f_{n+1}(x_3)}{d x_3}.
\end{align*}
\end{proof}
The variables $x_i$ in our situation are expressed as
\begin{align*}
x_1 &= \tr \rho(\beta) = y, \\
x_2 &= \tr \rho(\beta') = \tr \rho(\beta^{-1} \mu \beta \mu^{-1}) = \tr \rho(\alpha \beta \alpha^{-1} \beta^{-1}) = x^2 + y^2 + z^2 - xyz - 2,\\
x_3 &= \tr \rho(\beta \beta') =  \tr \rho(\mu \beta \mu^{-1}) = \tr \rho(\beta) = y.
\end{align*}
We can regard the adjoint Reidemeister torsion $\BT_{M_n, \lambda}(\rho)$ in Lemma~\ref{lem:adT} as a function in $y$
on our geometric components.
\begin{proposition}
  \label{prop:torsionOPTB}
  If $x \not= 0$, then the variable $x_2$ satisfies
  \[x_2 = x^2 + y^2 + z^2 - xyz - 2
  = \frac{yz}{x} = y \frac{f_{n}(y)-y}{f_{n-1}(y)-1}.\]

  Hence we can express the adjoint Reidemeister torsion
  $\BT_{M_n, \lambda}(\rho)$ in Lemma~\ref{lem:adT} as a function in $y$ as follows:
  \[
  \BT_{M_n, \lambda}(\rho)
  = 3 + f_{n+1}(y)
  + y \frac{f_n(y)-y}{f_{n-1}(y)-1}  \frac{d f_{n}(y)}{d y}
  - y  \frac{d f_{n+1}(y)}{d y}.\]
\end{proposition}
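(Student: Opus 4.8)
The plan is to verify both equalities purely algebraically from the defining equations \eqref{eq:phi1_BP}--\eqref{eq:phi3_BP} of the geometric component, and then to feed the result into Lemma~\ref{lem:adT}. The expression $x_2 = x^2 + y^2 + z^2 - xyz - 2$ is already in hand from the commutator trace identity applied to $\beta' = \alpha\beta\alpha^{-1}\beta^{-1}$, so the content is to show that on the locus $x \neq 0$ this polynomial collapses to $yz/x$, and then to the stated rational function of $y$.

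First I would dispose of the rightmost equality, which is immediate. Since $x \neq 0$, equation \eqref{eq:phi2_BP} lets me solve $z = (y - f_n(y))/x$, and equation \eqref{eq:phi1_BP} gives $x^2 = 1 - f_{n-1}(y)$. Hence
\[
\frac{yz}{x} = \frac{y(y - f_n(y))}{x^2} = \frac{y(y - f_n(y))}{1 - f_{n-1}(y)} = y\,\frac{f_n(y) - y}{f_{n-1}(y) - 1},
\]
which is exactly the claimed third expression.

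The real work is the middle equality $x^2 + y^2 + z^2 - xyz - 2 = yz/x$. I would substitute $x^2 = 1 - f_{n-1}(y)$ and $xz = y - f_n(y)$ (so that $xyz = y^2 - y f_n(y)$ and $z^2 = (y - f_n(y))^2/(1 - f_{n-1}(y))$) into the left-hand side and clear the denominator $1 - f_{n-1}(y)$. After using the Chebyshev recurrence $f_{n+1} = y f_n - f_{n-1}$ to rewrite the linear term $y f_n(y)$, the identity I must check reduces to a polynomial relation among $f_{n-1}, f_n, f_{n+1}$. The key step---and the only nontrivial one---is the Chebyshev determinant identity $f_{n+1}(y) f_{n-1}(y) - f_n(y)^2 = -1$ (proved by an easy induction from the recurrence), which cancels the quadratic cross term and collapses everything to the trivial relation $f_{n-1} + f_{n+1} - y f_n = 0$, i.e. the recurrence itself. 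Notably this computation uses only \eqref{eq:phi1_BP} and \eqref{eq:phi2_BP}; equation \eqref{eq:phi3_BP} is not needed here.

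Finally, having identified $x_2$ with the rational function of $y$, the formula for $\BT_{M_n, \lambda}$ follows by substituting $x_1 = x_3 = y$ (from the displayed values of the $x_i$) and $x_2 = y\,(f_n(y) - y)/(f_{n-1}(y) - 1)$ into the expression of Lemma~\ref{lem:adT}. The main obstacle is organizing the simplification of the middle equality so that the determinant identity can be applied; once one anticipates that $f_{n+1} f_{n-1} - f_n^2 = -1$ is the mechanism that removes the $f_{n+1}f_{n-1}$ cross term, the remaining algebra is routine.
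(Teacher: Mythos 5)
Your proposal is correct, and it reaches the middle equality by a genuinely different mechanism than the paper. The paper never clears denominators: it substitutes \eqref{eq:phi1_BP} and \eqref{eq:phi2_BP} to get $x_2 = yf_n(y) - f_{n-1}(y) - 1 + z^2$, then uses \eqref{eq:phi3_BP} together with the recurrence (so that $yf_n(y) - f_{n-1}(y) - 1 = f_{n+1}(y) - 1 = zf_n(y)/x$) and applies \eqref{eq:phi2_BP} once more to collapse $zf_n(y)/x + z^2 = z\bigl(f_n(y) + xz\bigr)/x = yz/x$ in two lines. You instead eliminate $x$ and $z$ entirely; carrying out your reduction, after multiplying through by $1 - f_{n-1}(y)$ the claim becomes the polynomial identity
\begin{equation*}
f_n(y)^2 + f_{n-1}(y)^2 - y\, f_n(y) f_{n-1}(y) = 1,
\end{equation*}
which, via $yf_n = f_{n+1} + f_{n-1}$, is exactly your determinant identity $f_{n+1}f_{n-1} - f_n^2 = -1$, so your argument goes through. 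Your observation that \eqref{eq:phi3_BP} is not needed is accurate and actually explains the relation between the two proofs: on the locus $x \neq 0$, substituting $z = (y - f_n(y))/x$ and $x^2 = 1 - f_{n-1}(y)$ into \eqref{eq:phi3_BP} reduces it to the very same identity, so \eqref{eq:phi3_BP} is redundant given \eqref{eq:phi1_BP} and \eqref{eq:phi2_BP} precisely because the determinant identity holds. Thus the paper's invocation of \eqref{eq:phi3_BP} and your invocation of $f_{n+1}f_{n-1} - f_n^2 = -1$ are interchangeable: the paper's route is shorter and stays within the given defining equations, while yours is self-contained modulo an easy induction and exposes the algebraic redundancy in the system \eqref{eq:phi1_BP}--\eqref{eq:phi3_BP}. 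Your treatment of the outer equality and of the final substitution $x_1 = x_3 = y$ into Lemma~\ref{lem:adT} coincides with the paper's.
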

\begin{proof}
  By Eqs.~\eqref{eq:phi1_BP} and~\eqref{eq:phi2_BP} the variable
  $x_2 = x^2 + y^2 + z^2 - xyz - 2$ turns into
  \begin{equation}
    \label{eq:phi1phi2intox2}
    1-f_{n-1}(y) + y^2 + z^2 - y(y-f_n(y)) -2
    = yf_n(y) - f_{n-1}(y) -1 + z^2.
  \end{equation}
  By Eq.~\eqref{eq:phi3_BP} we have $yf_n(y) - f_{n-1}(y) -1 = zf_n(y) /x$. We can rewrite \eqref{eq:phi1phi2intox2} as
  \[
  \frac{zf_n(y)}{x} + z^2
  = \frac{z(f_n(y) + xz)}{x}
  = \frac{yz}{x}
  \]
  using Eq.~\eqref{eq:phi2_BP} again.
\end{proof}
We close this section with the explicit form of adjoint Reidemeister torsion on the extra components $L$.
Suppose $x=y=0$ (this occurs only when $n=4k+2$, $k \in \BZ$). Then we have the slope $\ell = m^{-4}$
and the local coordinate $x_2 = z^2 -2 = m^2+m^{-2}$.
By Lemma~\ref{lem:adT} and the subsequent explanation of the variables,
the adjoint Reidemeister torsion $\BT_{M_n, \lambda}(\rho)$ on the extra component $L$ is expressed as 
\begin{equation}
  \label{eq:torsion_extra}
  \BT_{M_n, \lambda}(\rho)
  = 3 + f_{n+1}(y) + (m^2+m^{-2}) \frac{d f_{n}(y)}{d y}   - y \frac{d f_{n+1}(y)}{d y}
\end{equation}
at $y=0$.
Moreover the derivative of $f_k(y)$ is also expressed as follows.
\begin{lemma}
  \label{lem:diff_f}
  We have
  \[
  \frac{df_k(y)}{dy} = \frac{(k-1) f_{k+1}(y) - (k+1) f_{k-1}(y)}{y^2-4}.
  \]
\end{lemma}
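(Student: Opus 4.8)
The plan is to use the same substitution $y = b + b^{-1}$ that already appears in the proof of Lemma~\ref{lemT}, under which the Chebyshev polynomial acquires the closed form $f_k(y) = \frac{b^k - b^{-k}}{b - b^{-1}}$ and the factor $y^2-4$ becomes $(b-b^{-1})^2$. The advantage is that differentiation with respect to $y$ reduces, via the chain rule, to differentiation with respect to $b$ divided by $dy/db$, and the entire claim turns into an identity among Laurent polynomials in $b$ that can be checked monomial by monomial. As an alternative that avoids the change of variables one can induct on $k$; I will indicate this at the end.

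First I would record the two elementary identities $y^2-4 = (b-b^{-1})^2$ and $\frac{dy}{db} = 1 - b^{-2} = b^{-1}(b-b^{-1})$. The second is the crucial simplification: writing $u = b^k - b^{-k}$ and $w = b - b^{-1}$, so that $f_k = u/w$, it gives
\[
\frac{df_k}{dy} = \frac{df_k/db}{dy/db} = \frac{(u'w - uw')/w^2}{b^{-1}w} = \frac{b\,(u'w - uw')}{(b-b^{-1})^3}.
\]
Next I would expand the numerator $b(u'w-uw')$ directly from $u' = k(b^{k-1} + b^{-k-1})$ and $w' = 1 + b^{-2}$, obtaining $(k-1)b^{k+1} - (k+1)b^{k-1} + (k+1)b^{-k+1} - (k-1)b^{-k-1}$.

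On the other side I would rewrite the claimed numerator using the closed form,
\[
(k-1)f_{k+1} - (k+1)f_{k-1} = \frac{(k-1)(b^{k+1} - b^{-k-1}) - (k+1)(b^{k-1} - b^{-k+1})}{b - b^{-1}},
\]
so that the right-hand side of the lemma, divided by $(b-b^{-1})^2$, becomes exactly the same Laurent polynomial over $(b-b^{-1})^3$. Comparing the two expressions term by term confirms the equality.

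The computation is routine; the only point needing a word of justification is that an identity verified after the substitution genuinely descends to the polynomial identity in $y$. This holds because both sides are invariant under $b \mapsto b^{-1}$ and the map $b \mapsto b + b^{-1}$ is surjective onto the $y$-line, so the verified Laurent identity holds at every value $y_0 = b_0 + b_0^{-1}$ and hence as polynomials; in particular this simultaneously shows that $y^2-4$ divides $(k-1)f_{k+1} - (k+1)f_{k-1}$, so the right-hand side is the polynomial $\frac{df_k}{dy}$ as asserted. I would finally note that the formula extends from $k \ge 0$ to all integers via $f_{-k} = -f_k$, under which both sides change sign consistently. For the inductive alternative, one differentiates $f_{k+1} = y f_k - f_{k-1}$ and substitutes the inductive hypotheses for $\frac{df_k}{dy}$ and $\frac{df_{k-1}}{dy}$; the identity then collapses after repeated use of $y f_j = f_{j+1} + f_{j-1}$, and I expect this bookkeeping to be the most delicate step.
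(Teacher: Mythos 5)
Your proof is correct: the expansion of $b(u'w-uw')$ and of $(k-1)f_{k+1}-(k+1)f_{k-1}$ both yield $(k-1)b^{k+1}-(k+1)b^{k-1}+(k+1)b^{-k+1}-(k-1)b^{-k-1}$, and your descent argument from the Laurent identity back to the identity in $y$ is sound. The paper actually states Lemma~\ref{lem:diff_f} without any proof, treating it as a standard Chebyshev identity, but your argument is precisely the device the paper itself uses in proving Lemma~\ref{lemT} --- the substitution $y=b+b^{-1}$, the closed form $f_k(y)=\frac{b^k-b^{-k}}{b-b^{-1}}$, and differentiation via $\frac{df}{dy}=\frac{df/db}{dy/db}$ --- so it fills the omitted proof in the paper's own style.
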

The value of $f_n$ at $y=0$ is expressed as follows.
\begin{lemma}[{\cite[Lemma~4.3]{Baker-Petersen}}]
  \label{lem:fn0}
  $f_{2k}(0) = 0$ and $f_{2k+1}(0) = (-1)^k$.
\end{lemma}

From Lemma~\ref{lem:diff_f} and $f_{n+1}(0) = f_{4k+3}(0) = -1$ the derivatives of \eqref{eq:torsion_extra} turn out to be 
\begin{align*}
  \left. \frac{df_n(y)}{dy} \right|_{y=0}
  &= \left. \frac{(n-1) f_{n+1}(y) - (n+1) f_{n-1}(y)}{y^2-4} \right|_{y=0}
  = \frac{(n-1)(-1) - (n+1)(1)}{-4} = \frac{n}{2}, \\
  \left. \frac{df_{n+1}(y)}{dy} \right|_{y=0}
  &= \left. \frac{n f_{n+2}(y) - (n+2) f_{n}(y)}{y^2-4} \right|_{y=0}
  = 0.
\end{align*}
Hence we can rewrite Eq.~\eqref{eq:torsion_extra} as
\[\BT_{M_n, \lambda}(\rho) = 2 + (m^2 + m^{-2})\frac{n}{2}.\]

\begin{proposition}
  \label{prop:torsion_L}
  When $n \equiv 2 \pmod{4}$, we can express the adjoint Reidemeister torsion $\BT_{M_n, \gamma}(\rho)$ with respect to $\gamma=\mu^{p}\lambda^{q}$ on the extra component $L$ as the following function in $m$:
  \begin{equation}
  \label{eq:torsion_pq}
  \BT_{M_n, \gamma}(\rho) = \left( - \frac{p}{4}+ q \right) \left( 2 + (m^2 + m^{-2})\frac{n}{2}\right).
  \end{equation}

\end{proposition}
\begin{proof}
  This follows from the curve change formula (see~\cite[Theorem~4.1]{Porti}),
  which says 
  \begin{equation}
    \label{eq:curve_change}
    \BT_{M_n,\gamma}(\rho) = \frac{d \log (m^p \ell^q)}{d \log \ell} \BT_{M_n, \lambda}(\rho).
  \end{equation}
  Note that $\tr \rho(\mu^p \lambda^q) = m^p \ell^q + m^{-p}\ell^{-q} = m^{p-4q} + m^{-p+4q}$. 
\end{proof}

\section{Eigenvalues for the meridian $\mu$ and the longitude $\lambda$}
The adjoint Reidemeister torsion is defined by the pair of a hyperbolic $3$-manifold and a slope $\gamma$ in
the boundary.
We need to find the eigenvalues for the meridian $\mu$ and the longitude $\lambda$ to consider
the adjoint Reidemeister torsion with respect to an arbitrary slope $\gamma$.

If $\rho$ is irreducible, then $\rho(\mu) = \rho(\beta \alpha)$ and
$\rho(\beta)$ are also not conjugate to upper triangular matrices simultaneously.
For an irreducible $\SL_2(\BC)$-representation $\rho$, 
up to conjugation, we may assume that 
\[
\rho(\mu) = \rho(\beta \alpha) = \left[ \begin{array}{cc}
m & 1 \\
0 & 1/m \end{array} \right]
\quad \text{and} \quad 
\rho(\beta) = \left[ \begin{array}{cc}
b & 0 \\
* & 1/b\end{array} \right].
\]
The trace of $\rho(\alpha)=\rho(\beta)^{-1}\rho(\mu)$ equals to $m/b-*+b/m$.
Using the variable $x = \tr\rho(\alpha)$, we express $\rho(\beta)$ as
\[\rho(\beta) = \left[ \begin{array}{cc}
b & 0 \\
-x+b/m+m/b& 1/b\end{array} \right].\]
It follows from the irreducibility of $\rho$ that $-x+b/m+m/b \not =0$.

Since  $\lambda = \alpha \beta \alpha^{-1} \beta \alpha \beta^{-1} \alpha^{-1} \beta^{-1}$ and
$\alpha = \beta^{-1} \mu$, by a direct calculation we have
\[
\rho(\lambda) = \left[ \begin{array}{cc}
\ell & \cdots \\
bm(b/m+m/b-x)(**) & \ell^{-1} \end{array} \right]\]
where 
\begin{align*}
** &= (b^4 m^2 x-b^3 m^3 x^2-b^3 m^3-b^3 m x^2-b^3 m+b^2 m^4 x+b^2 m^2 x^3 \\
&\quad + \, 2 b^2 m^2 x+b^2 x - b m^3 x^2-b m^3-b m x^2-b m+m^2 x)/(b^3m^4).
\end{align*}
and $\ell =- m(**)+ m^{-4} - x y m^{-3} + x^2 m^{-2}$.
Since $\rho(\mu)$ and $\rho(\lambda)$ are commutative, the $(2, 1)$-entry of $\rho(\lambda)$ equals to $0$.
It follows from $-x+b/m+m/b \not =0$ that $**=0$.
Thus we get $\ell= m^{-4} - x y m^{-3} + x^2 m^{-2}.$
Similarly, from the $(2,2)$-entry of $\rho(\lambda)$ we get $\ell^{-1} = m^{4} - x y m^{3} + x^2 m^{2}.$

We have seen that the variables $x$ and $z$ can be regarded as functions in $y$.
We also describe the eigenvalues $m$ and $\ell$ as functions in $y$.

On the geometric component the functions $x$ and $z$ are expressed as  
\[x = \ve \sqrt{1-f_{n-1}(y)}, \, z = \ve \frac{y - f_n(y)}{\sqrt{1-f_{n-1}(y)}},\] 
where $\ve = \pm 1$ and $f_{n-1}(y) \not= 1$.
We can also assume $f_{n}(y) \not= y$ since we consider generic points on $X(\pi_1(M_n))$ (actually it is enough to assume $k_n(y) \not= 0$, see Subsection~\ref{subsec:newdefpolys}).
Then \[x = z \frac{1-f_{n-1}(y)}{y - f_n(y)} = (m+m^{-1})\frac{1-f_{n-1}(y)}{y - f_n(y)}.\]

We have 
\begin{align*}
\ell m^2 &= m^{-2} - x y m^{-1}+ x^2 \\
&= m^{-2} - (1+m^{-2})\frac{y - y f_{n-1}(y)}{y - f_n(y)}+ 1- f_{n-1}(y) \\
&= m^{-2} ( 1 -  \frac{y - y f_{n-1}(y)}{y - f_n(y)} ) + 1- f_{n-1}(y) - \frac{y - y f_{n-1}(y)}{y - f_n(y)}.
\end{align*}
Hence we can regard the eigenvalues $m$ and $\ell$ as the following functions in $y$:
\[\ell = u(y) m^{-2}+ v(y) m^{-4}, \quad m^2+m^{-2} = s(y)\]
where $u(y)$, $v(y)$ and $s(y)$ are 
\[
u(y) = -  \frac{(1- f_{n-1}(y)) f_{n}(y)}{y - f_n(y)} ,
\quad v(y) = \frac{f_{n-2}(y)}{y - f_n(y)},
\quad s(y) = \frac{(y - f_n(y))^2}{1-f_{n-1}(y)} - 2 (=z^2-2).
\]
We can extend $u(y)$ and $v(y)$ to the case of $y = 0$ by setting
$u(0)=0$ and $v(0)=1$.

Similarly $\ell^{-1} = u m^{2}+ v m^{4}$. 
We can rewrite $\ell\ell^{-1}=1$ as $u^2 + v^2 + s u v = 1$. 

\section{Relation between adjoint torsion and character variety}
We evaluate the adjoint torsion $\BT_{M, \gamma}(\rho)$ for any slope $\gamma=\mu^p\lambda^q$.
The adjoint torsion $\BT_{M, \gamma}(\rho)$ for a slope $\gamma=\mu^p\lambda^q$
is derived from $\BT_{M, \lambda}(\rho)$ in Proposition~\ref{prop:torsionOPTB} and
the curve change formula of the adjoint Reidemeister torsion.
The purpose of this section is to give an explicit form the adjoint torsion by
the Jacobian determinant of functions on the geometric component $D$ in the character variety $X(\pi_1(M_n))$ following~\cite{Yoon}.

\subsection{Defining polynomials of the character variety}
\label{subsec:newdefpolys}
We set function $E_\gamma(m,y)$ for a slope $\gamma=\mu^p\lambda^q$ on $X(\pi_1(M_n))$ as
\[E_\gamma(m,y):=m^p \ell^q = m^{p}  (u(y) m^{-2}+ v(y) m^{-4})^q.\]
Note that a solution of $t^2 - (\tr_{\gamma}) t + 1=0$ is non-constant
since the trace function $\tr_\gamma$ is non-constant on $D$ which is a norm curve in $X(\pi_1(M_n))$.

The functions $u(y)$ and $v(y)$ can be rewritten as
\[u(y) = -\frac{j_n(y) \ell^2_n(y)}{k_n(y)}, \quad v(y)=1-y\frac{\ell_n(y)}{k_n(y)}.\]
We regard $u(y)$ and $v(y)$ as being defined under the assumption that $k_n(y) \not= 0$ on $X(\pi_1(M_n))$.
We also rewrite the defining polynomials of $X(\pi_1(M_n))$ to describe $z = m + m^{-1}$ as an implicit function in $y$ under the assumption that $k_n(y) \not= 0$.

\begin{lemma}
  \label{lemma:new_equalities_X}
  We can regard $X(\pi_1(M_n)) \setminus \{k_n(y)=0\}$ as the set of points satisfying
  \begin{align}
    z^2 \ell_n(y) + h_n(y) k_n^2(y) &= 0, \label{eq:phi1} \\
    x k_n(y) -z \ell_n(y) &= 0, \label{eq:phi3} \\
     k_n(y) &\not= 0. \label{eq:kne0}
  \end{align}
\end{lemma}
\begin{proof}
  First we show $\hat \ell_n(y) \not= 0$ on the set of points satisfying~\eqref{eq:phi1}, \eqref{eq:phi3} and~\eqref{eq:kne0}. Suppose $\hat \ell_n(y)=0$. Then $h_n(y)=0$ or $k_n(y)=0$ by~\eqref{eq:phi1} which is a contradiction to that neither of $h_n(y)$ nor $k_n(y)$ has any common roots with $\hat\ell_n(y)$ by Lemma~\ref{lemma:gcm_jlhk}.

  We denote by $\varphi_1$, $\varphi_2$, $\varphi_3$ the defining polynomials in Eqs.~\eqref{eq:phi1_hell}, \eqref{eq:phi2_hk} and~\eqref{eq:phi3_jkell} respectively.
  Write $\varphi_3 = j_n(y)\varphi'_3$. The character variety $X(\pi_1(M_n))$ is the vanishing set of the ideal $(\varphi_1, \varphi_2, \varphi_3)$.
  Here the symbol $(\varphi_1, \varphi_2, \varphi_3)$ stands for the ideal generated by $\varphi_1$, $\varphi_2$ and $\varphi_3$.
  According to Eqs.~\eqref{eq:phi1_hell}--~\eqref{eq:phi3_jkell}, $X(\pi_1(M_n))$ is the union the vanishing set of the ideal $(\varphi_1, \varphi_2, \varphi_3')$ and that of $(\varphi_1, \varphi_2, j_n(y))$.
  It was shown in~\cite[Proposition~5.25]{Baker-Petersen} that the vanishing set of $(\varphi_1, \varphi_2, j_n(y))$ is contained in that of $(\varphi_1, \varphi_2, \varphi'_3)$. We can regard $X(\pi_1(M_n))$ as the vanishing set of the ideal $(\varphi_1, \varphi_2, \varphi'_3)$.
  
  Set $\varphi'_1=z \ell_n(y) + h_n(y)k^2_n(y)$. 
  Under the assumption $k_n(y) \not= 0$, each point of $X(\pi_1(M_n))$ is contained in the vanishing set of the ideal $(\varphi'_1, \varphi'_3)$ since we can rewrite $\varphi_2 = 0$ as $\varphi'_1=0$ by $\varphi_3=0$ on $X(\pi_1(M_n))$.

  On the other hand, we can rewrite $\varphi'_1=0$ as $\varphi_2=0$ on the vanishing set of the ideal $(\varphi'_1, \varphi'_3)$ under the assumption $k_n(y) \not= 0$.
  When $n \not\equiv 2 \pmod{4}$, we can rewrite $\ell_n(y) \varphi_1= 0$ by $\varphi'_3=0$ on the vanishing set of $(\varphi'_1, \varphi'_3)$ since $\ell_n(y) = \hat \ell_n(y) \not= 0$.
  When $n \equiv 2 \pmod{4}$, we can rewrite $\ell_n(y) \varphi_1= 0$ by $\varphi'_3=0$ on the vanishing set of $(\varphi'_1, \varphi'_3)$ under the assumption $y \not= 0$ since $\ell_n(y)=y\hat\ell_n(y)$. If $y=0$, then $\varphi'_3=0$ implies $x=0$ under the assumption $k_n(y) \not= 0$. These points $(0, 0, z)$ lie in the line $L$ of $X(\pi_1(M_n))$.
  Hence we can regard the vanishing set of $(\varphi'_1, \varphi'_3)$ is contained in $X(\pi_1(M_n))$ under the assumption $k_n(y) \not= 0$.
\end{proof}

\begin{remark}
  The points satisfying $k_n(y)=0$ on $X(\pi_1(M_n))$ are given by
  $( \pm \sqrt{2-y^2}, y, 0)$ where $y=2\cos(2k-1)\pi/(n+2)$ for $1 \leq k \leq n+1$.
\end{remark}

We define function $G(m, y)$ as
\[
G(m, y):= (m+m^{-1})^2 \ell_n(y) + h_n(y) k_n^2(y)
\]
also set $F(m, y)= m^2 + m^{-2} - s(y)$.

\begin{remark}
  We have the following factorization of $s(y)$ and relation between $G(m, y)$ and $F(m, y)$.
  \begin{itemize}
  \item
    We can rewrite $s(y)$ as
    \[\frac{(y-f_n(y))^2}{1-f_{n-1}(y)} -2
    = -\frac{\hat h_n(y)k_n^2(y)}{\hat \ell_n(y)} -2.\]
  \item
    The function $G(m, y)$ satisfies that $G(m, y) = \ell_n(y) F(m, y)$ on $\hat \ell_n(y) \not= 0$.
  \end{itemize}
\end{remark}
We also touch the values of $k_n$, $\hat h_n$ and $\hat \ell_n$ at $y=0$ which are needed later.
These values can be found in~\cite[Proof of Proposition~5.35]{Baker-Petersen}.
\begin{lemma}
  \label{lem:khell0}
  If $n=4k+2$, then $k_n(0)$, $\hat h_n(0)$ and $\hat \ell_n(0)$ are expressed as 
  \[
    k_n(0) = (-1)^{k+1}2, \quad
    \hat h_n(0) = (-1)^{k-1}k \quad \text{and} \quad
    \hat \ell_n(0) = (-1)^k \frac{n}{2}.
  \]
\end{lemma}
\begin{proof}
  By Lemmas~\ref{lem:diff_f} and~\ref{lem:fn0} it holds that
  \[\left.\frac{f_{2l}(y)}{y}\right|_{y=0}=\left.\frac{d}{dy} f_{2l}(y) \right|_{y=0}=l (-1)^{l-1}.\]
  The values of $k_n$, $\hat h_n$ and $\hat \ell_n$ at $0$ follow from the definitions.
\end{proof}

\subsection{Jacobian determinant and adjoint Reidemeister torsion}
We show the equality between the adjoint Reidemeister torsion for a slope $\gamma$ and
the Jacobian determinant given by the pair of functions $G$ and $E_\gamma$ in the variables $m$ and $y$.
\begin{proposition}
  \label{prop:torsion_jabobian}
  Set a slope $\gamma = \mu^p \lambda^q$. Then it holds 
  on $X(\pi_1(M_n)) \setminus \{k_n(y)\hat h_n(y)=0\}$ that
  \begin{equation}
    \label{eq:Jacobian_torsion}
  \frac{\partial(G, E_\gamma)}{\partial(m,y)} 
  = - \frac{2 E_\gamma}{m} k_n(y) \BT_{M_n, \gamma}(\rho).
  \end{equation}
  Here $\frac{\partial(G, E_\gamma)}{\partial(m,y)}$ denotes
  the Jacobian determinant
  $\det \begin{bmatrix}
    \frac{\partial G}{\partial m} & \frac{\partial G}{\partial y} \\
    \frac{\partial E}{\partial m} & \frac{\partial E}{\partial y} 
  \end{bmatrix}$.  
\end{proposition}
\begin{remark}
  The assumption $X(\pi_1(M_n)) \setminus \{k_n(y) \hat h_n(y)=0\}$ means that
  $D \setminus \{f_n(y)-y=0\}$ when $n \not \equiv 2 \pmod{4}$
  or $D \setminus \{f_n(y)-y=0\} \cup L$ when $n \equiv 2 \pmod{4}$
  since $f_n(y) - y$ equals $k_n(y) \hat h_n(y)$ when $n \not \equiv 2 \pmod{4}$ or $k_n(y)y\hat h_n(y)$ when $n \equiv 2 \pmod{4}$.
\end{remark}

Proposition~\ref{prop:torsion_jabobian} follows from the next lemmas
and the curve change formula~\eqref{eq:curve_change} of the adjoint Reidemeister torsion (see~\cite[Theorem~4.1]{Porti}).
\begin{lemma}
  \label{lemT2}
  On $D \setminus \{f_n(y) - y = 0\}$, 
  \[\frac{\partial(F, E_\gamma)}{\partial(m,y)}
  =-\frac{2E_{\ell}}{m}\left(p\frac{\ell}{m}\frac{dm}{d\ell}+q\right)
  \frac{2vs'+2u'+v's}{v}.
  \]  
\end{lemma}
\begin{proof}
  By $F=0$ we have $0 = dF= \frac{\partial F}{\partial m}  dm + \frac{\partial F}{\partial y} dy$. This implies that
  \begin{align*}
    \frac{\partial(F, E_\gamma)}{\partial(m, y)}
    &= - \frac{\partial F}{\partial y} \frac{\partial E_\gamma}{\partial m}
    +  \frac{\partial F}{\partial m}  \frac{\partial E_\gamma}{\partial y}\\
    &= - \frac{\partial F}{\partial y}  \frac{\partial E_\gamma}{\partial m}
    - \frac{\partial F}{\partial y}  \frac{d y}{d m} \frac{\partial E_\gamma}{\partial y}\\
    &= - \frac{\partial F}{\partial y}
    \left( \frac{\partial E_\gamma}{\partial m} + \frac{\partial E_\gamma}{\partial y} \frac{d y}{d m} \right)\\
    \intertext{by $d E_\gamma = \frac{\partial E_\gamma}{\partial m} dm + \frac{\partial E_\gamma}{\partial y} dy = \frac{\partial E_\gamma}{\partial m} dm + \frac{\partial E_\gamma}{\partial \ell} d\ell $}
    &= - \frac{\partial F}{\partial y}
    \left( \frac{\partial E_\gamma}{\partial m} + \frac{\partial E_\gamma}{\partial \ell} \frac{d\ell}{dm} \right)\\
    &= - \frac{\partial F}{\partial y}
    \left( p\frac{E_\gamma}{m}\frac{dm}{d\ell} + q\frac{E_\gamma}{\ell}  \right)\frac{d\ell}{dm} \\
    &= - \frac{E_\gamma}{\ell}
    \left( p\frac{\ell}{m}\frac{dm}{d\ell} + q\right)\frac{\partial F}{\partial y}\frac{d\ell}{dm}.
  \end{align*}

  Since $\ell = u(y) m^{-2}+ v(y) m^{-4}$ and $d\ell = \frac{\partial \ell}{\partial m}dm + \frac{\partial \ell}{\partial y}dy$,
  we can rewrite $- \frac{\partial F}{\partial y}\frac{d \ell}{d m}$ as
  \begin{align*}
    - \frac{\partial F}{\partial y}  \frac{d \ell}{d m}
    &= - \frac{\partial F}{\partial y}  \left(-2m^{-3} u - 4  m^{-5} v +(u' m^{-2}+v' m^{-4}) \frac{dy}{dm} \right) \\
    &=  (2m^{-3} u +4  m^{-5} v) \frac{\partial F}{\partial y} + (u' m^{-2}+v' m^{-4}) \frac{\partial F}{\partial m} \\
    &=  (2m^{-3} u +4  m^{-5} v) (-s')+ (u' m^{-2}+v' m^{-4}) (2m-2m^{-3}) \\
    &= 2m^{-5} \left( u' m^4 + (v' - us')m^2 - 2vs'- u' -v' m^{-2} \right) \\
    &= 2m^{-5} \left( u' (s m^2-1) + (v' - us')m^2 - 2vs' - u' - v' (s-m^2) \right) \\
    &= 2m^{-5} \left( (u's -  us'+ 2v')m^2 - 2vs'- 2u'-v's \right).
  \end{align*}

  By taking the derivative of  $u^2+v^2+uvs=1$ we have $2uu'+2vv'+u'vs+uv's+uvs'=0$.
  This implies that $u's -  us'+ 2v' = - \frac{u}{v} (2vs' + 2u'+v's)$ and so 
  \[
  - \frac{\partial F}{\partial y}  \frac{d \ell}{d m}
  =  - 2m^{-5}\frac{2vs' + 2u'+v's}{v}(um^2 +v)
  =  - 2 \ell m^{-1} \,  \frac{2vs' + 2u'+v's}{v}.
  \]
  Hence we obtain 
  \begin{align*}
    \frac{\partial(F, E_\gamma)}{\partial(m, y)}
    &= - \frac{E_\gamma}{\ell}  \left( p \frac{\ell}{m} \frac{d m}{d \ell}+ q \right) \frac{\partial F}{\partial y} \frac{d \ell}{d m}  \\
    &= - 2 \frac{E_\gamma}{m}  \left( p \frac{\ell}{m} \frac{dm}{d\ell}+ q \right) \frac{2vs' + 2u'+v's}{v}.
  \end{align*} 
\end{proof}
\begin{lemma}
  \label{lemT}
  On $D \setminus \{f_n(y)-y = 0\}$,
  \[
  2vs' + 2u'+v's
  = \frac{f_{n-2}(y)}{1-f_{n-1}(y)}  \BT_{M_n, \lambda}(\rho)
  = \frac{v(y)k_n(y)}{\ell_n(y)} \BT_{M_n, \lambda}(\rho).
  \]
\end{lemma}
\begin{proof}
  Write $y = b+b^{-1}$. 
  Then $f_k(y) = \frac{b^k - b^{-k}}{ b - b^{-1} }$. 
  By a direct calculation, using $\frac{d g(y)}{d y} = \frac{d g(y)}{d b} / \frac{d y}{d b}$,
  we can express $2vs' + 2u' + v's$ as
  \begin{align*}
    2vs' + 2u'+v's &=
    \frac{b^{-n-1} (b^n+b^2) }{(b-1)^2 (b+1)^2 (b^n+1)^2} (-n b^{2 n}-4 b^{2 n}+6 b^{n+2}-n b^{n+4}-4 b^{n+4} \\
    &\quad  + \, 6 b^{2 n+2}+n b^{2 n+4}-4 b^{2 n+4}+2 b^{3 n+2}+n b^n-4 b^n+2 b^2),\\
    \BT_{M_n, \lambda}(\rho)
    &= -\frac{b^{-n} }{(b-1)^2 (b+1)^2 (b^n+1)} (-n b^{2 n}-4 b^{2 n}+6 b^{n+2}-n b^{n+4}-4 b^{n+4} \\
    &\quad  + \, 6 b^{2 n+2}+n b^{2 n+4}-4 b^{2 n+4}+2 b^{3 n+2}+n b^n-4 b^n+2 b^2).
  \end{align*}
  The lemma follows from
  \[
    \frac{f_{n-2}(y)}{1-f_{n-1}(y)} = -\frac{b^n + b^2}{b (b^n+1)} \quad
    \text{and} \quad
    \frac{f_{n-2}(y)}{1-f_{n-1}(y)} = \frac{v(y)(y-f_n(y))}{1-f_{n-1}(y)}
    = \frac{v(y)k_n(y)}{\ell_n(y)}.
  \]
\end{proof}
We turn to the proof of Proposition~\ref{prop:torsion_jabobian}.
\begin{proof}[Proof of Proposition~\ref{prop:torsion_jabobian}]
  The Jacobian determinant in the left hand side turns into
  \begin{equation}
    \label{eq:jac_tor}
  \frac{\partial(\ell_n F, E_\gamma)}{\partial(m,y)}
  =\ell_n(y) \frac{\partial(F, E_\gamma)}{\partial(m,y)}
  -\ell_n'(y)F(m, y)\frac{\partial E_\gamma}{\partial m}.
  \end{equation}
  We have $F(m, y)=0$ on $D \setminus \{f_n(y)-y = 0\}$.
  It follows from Lemmas~\ref{lemT2} and~\ref{lemT} that
  \begin{align*}
    \frac{\partial(G, E_\gamma)}{\partial(m,y)}
    &= -\frac{2E_\gamma}{m}\left(p\frac{\ell}{m}\frac{dm}{d\ell}+q\right)
    k_n(y) \BT_{M_n, \lambda}(\rho) \\
    &= -\frac{2E_\gamma}{m} k_n(y) \BT_{M_n, \gamma}(\rho).
  \end{align*}
  Note that we use the curve change formula~\eqref{eq:curve_change}, that is, 
  \[\BT_{M_n, \gamma}(\rho) = \frac{d \log (m^p \ell^q)}{d \log \ell} \BT_{M_n, \lambda}(\rho)
  = \left( p \frac{\ell}{m}  \frac{dm}{d\ell}+ q \right) \BT_{M_n, \lambda}(\rho)\]
  in the last equality.

  Next we consider the component $L$ in the case of $n \equiv 2 \pmod{4}$.
  The function $\ell_n(y)$ turns into $y \hat\ell_n(y)$. It holds that $x=0$, $y=0$ and $m^4\ell = 1$ on $L$.
  Together with
  \[\frac{\partial E_\gamma}{\partial m} = \frac{\partial}{\partial m} m^p \ell^q = p \frac{E\gamma}{m}+q\frac{E_\gamma}{\ell} \frac{d\ell}{dm}=\frac{E\gamma}{\ell}\left(p \frac{\ell}{m}\frac{dm}{d\ell}+q\right)\frac{d\ell}{dm},\]
  we have 
  \[
  \frac{\partial(G, E_\gamma)}{\partial(m,y)}
  = - \left.\frac{E_\gamma}{\ell} \frac{d\ell}{dm} \left(p\frac{\ell}{m}\frac{dm}{d\ell}+q\right) \hat\ell_n(y)F(m, y)\right|_{y=0}.
  \]
  It follows from Remark~\ref{lem:khell0} that 
  \[\hat \ell_n(0) = (-1)^{k}\frac{n}{2} \quad \text{and} \quad
  -\frac{k_n^2(0) \hat h_n(0)}{\hat \ell_n(0)} = 4\left( \frac{1}{2} - \frac{1}{n}\right)\]
  where $n=2(2k+1)$.
  Since $m^4\ell=1$ on $L$ and $F(m, y)= m^2 + m^{-2} - s(y) = m^2 + m^{-2} + (k^2_n(y) \hat h_n(y))/ \hat \ell_n(y) + 2$, 
  the Jacobian determinant $\frac{\partial(G, E_\gamma)}{\partial(m,y)}$ 
  is expressed as
  \begin{align*}
    \frac{\partial(G, E_\gamma)}{\partial(m, y)}
    &= - \frac{E_\gamma}{m^{-4}} (-4m^{-5})
    \left.\left( p \frac{\ell}{m} \frac{dm}{d\ell}+ q \right)\right|_{y=0}
    (-1)^k \frac{n}{2}
    \left( m^2 + m^{-2} + \frac{4}{n}\right) \\
    &=  - \frac{2 E_\gamma}{m}
    (-1)^{k+1}2
    \left( -\frac{p}{4} + q \right)
    \left( \frac{n}{2} (m^2 + m^{-2}) + 2\right).
  \end{align*} 
  By $k_n(0)=(-1)^{k+1}2$ from Remark~\ref{lem:khell0} and
  Proposition~\ref{prop:torsion_L},
  we have
  \[\frac{\partial(G, E_\gamma)}{\partial(m, y)} = -\frac{2E_\gamma}{m}k_n(0) \BT_{M_n, \gamma}(\rho).\]
\end{proof}

\section{Checking conjecture}
We will show the following main theorem
which gives infinitely many new supporting examples to the conjecture by~\cite{Gang}.
\begin{theorem}
  \label{thm:main}
  The conjecture by~\cite{Gang} is true 
  for every hyperbolic once-punctured torus bundle $M_n$ with tunnel number one.
\end{theorem}
Since $\BT_{M_n, \gamma^{-1}}(\rho) = - \BT_{M_n, \gamma}(\rho)$
we can assume that $q \ge 0$.
The proof will be divided into the cases $n \not\equiv 2$ and $n \equiv 2 \pmod{4}$.
Subsections~\ref{subsec:nnot2mod4} and~\ref{subsec:n2mod4} deal with the cases of $n \not\equiv 2$ and $n \equiv 2 \pmod{4}$ respectively.
\subsection{$n \not\equiv 2 \pmod{4}$}
\label{subsec:nnot2mod4}
The character variety has only geometric component $D$. The function $\ell_n(y)$ and $h_n(y)$ satisfy $\ell_n(y) = \hat \ell_n(y)$ and $h_n(y) = \hat h_n(y)$.
We have $m^2 + m^{-2} - s(y) = 0$ from $G(m, y) =0$ since $G(m, y) = \ell_n(y)F(m, y)$ and $\ell_n(y) \not= 0$.

\subsubsection{$p$ is even}
\label{subsec:check_X0_I}
By $m^2 + m^{-2}=s$ and
$f_k(s)=(m^{2k}-m^{-2k})/(m^2-m^{-2})$
we have $m^{2k} = f_{k}(s) m^2 - f_{k-1}(s)$.
Let $\delta_r(y) = \sum_{k=0}^q {q \choose k} u^k(y)  v^{q-k}(y) f_{k+r}(s(y)).$
Recall that $s$ is a function in $y$.
Hence $E_\gamma (m,y)$ turns out to be 
\[
E_\gamma (m,y)
=  m^p \ell^q = m^{p-4q}  (u m^{2}+ v)^q
= \sum_{k=0}^q {q \choose k} u^k v^{q-k} m^{2k+p-4q}
= g(y) m^2 - h(y)
\] 
where $g(y) = \delta_{p/2-2q}(y)$ and $h(y) = \delta_{p/2-2q-1}(y)$. 

Similarly, $m^{-p}\ell^{-q} = g(y) m^{-2} - h(y)$.
For abbreviation, we use $g$ and $h$ instead of $g(y)$ and $h(y)$.
We can rewrite $(m^p\ell^q)(m^{-p}\ell^{-q})=1$ as $g^2 + h^2 - s g h =1$.
By Proposition~\ref{prop:torsion_jabobian} and Eq.~\eqref{eq:jac_tor} we can see that 
\begin{align*}
  \frac{-2 E_\gamma}{m} k_n(y) \BT_{M_n, \gamma}(\rho)
  &= \frac{\partial(G(m, y),  g m^2 - h)}{\partial(m,y)} \\
  &= \ell_n(y)\frac{\partial(F(m, y),  g m^2 - h)}{\partial(m,y)}\\
  &= \ell_n(y) \{(2m - 2 m^{-3}) (m^2 g' - h') + s' (2m g)\}.
\intertext{Then}
-E_\gamma \frac{ k_n(y) }{ \ell_n(y) } \BT_{M_n, \gamma}(\rho)
&= (m^2 - m^{-2}) (m^2 g' - h') + m^2 g s' \\
&= m^4 g' + m^2 (g s' - h') - g' + m^{-2} h' \\
&= (s m^2 -1) g' + m^2 (g s' - h') - g' + (s- m^2)h' \\
&= m^2 (s g' + g s' - 2 h') - 2 g' + s h'.
\end{align*}

By taking derivative of $g^2 + h^2 - s g h =1$ with respect to $y$,
we have $2g g' + 2h h' - (sg)'h - s g h'=0$.
This implies that $2 g' -s h' = \frac{h}{g} \left( (s g)'  -2 h' \right)$ and 
\begin{align*}
-E_\gamma \frac{ k_n(y) }{ \ell_n(y) }  \BT_{M_n, \gamma}(\rho)
&= \left(m^2 - \frac{h }{g}\right) \left( (s g)'  -2 h' \right)
= \frac{E_\gamma}{g} \left( (s g)'  -2 h' \right).
\end{align*}
We have
\[- \frac{ k_n(y) }{ \ell_n(y) } \BT_{M_n, \gamma}(\rho) = \frac{P'}{g}\] 
by putting $P(y) =  sg - 2h = g(m^2 + m^{-2}) -2h = \tr \rho(\mu^p \lambda^q)$.

Since $s = -\frac{k^2_n(y) h_n(y)}{\ell_n(y)} - 2$ and $g^2 + h^2 - s g h = 1$
it follows that 
\begin{align}
  P^2 - 4
  &= (g s - 2h)^2 - 4(g^2 + h^2 - s g h) \notag \\
  &= (s^2-4) g^2 \label{eq:P^2-4}\\
  &= h_n(y) \big[ k_n^2(y)h_n(y) + 4 \ell_n(y) \big] \left(\frac{k_n(y)}{\ell_n(y)}  g \right)^2. \notag
\end{align}
Note that any prime factor of the denominator of $g$ is a factor of $k_n(y)$ or $\ell_n(y)$
since $g$ is defined as a polynomial of a rational functions $u(y)$, $v(y)$ and $s(y)$ whose denominators are expressed as $k_n$ and $\hat \ell_n$.

So $h_n(y) \big[ k_n^2(y)h_n(y) + 4 \ell_n(y)) \big]$ is coprime with the denominator of $\frac{k_n(y)}{\ell_n(y)}g$.
This implies that the denominator of $\left(\frac{k_n}{\ell_n(y)} g \right)^2$ in \eqref{eq:P^2-4} is exactly the denominator of $P^2-4$.
Hence the denominator of $\frac{k_n(y)}{\ell_{n}(y)} g$ is exactly the denominator of $P$. 

Write $P = \frac{P_1}{P_2}$ and $-\frac{k_n(y)}{\ell_n(y)} g = \frac{R}{P_2}$ where each pair of
$\{P_1, P_2\}$ and $\{R, P_2\}$ is coprime.
Then we can express the sum of $(\BT_{M_n, \gamma}(\rho))^{-1}$ over $P=c$ as
\[
\sum_{P = c} \frac{1}{\BT_{M_n, \gamma}(\rho)}
= \sum_{P_1 - cP_2=0} \frac{\frac{R}{P_2}}{\left( \frac{P_1 - cP_2}{P_2}\right)'}
= \sum_{P_1 - cP_2=0} \frac{\frac{R}{P_2}}{\frac{(P_1 - cP_2)'}{P_2}}
= \sum_{P_1 - cP_2=0} \frac{R}{(P_1 - cP_2)'}.
\]

We now apply the Jacobi's residue theorem:
if $f$ is a non-constant polynomial with $f(0) \not= 0$ and no repeated zero and $g$ is a polynomial with $\deg g \le \deg f -2$ then 
\[
\sum_{f(z)=0} \frac{g(z)}{f'(z)} =0.
\]
(We refer the reader to~\cite[Chap.~5]{GH} for the details.)

For generic $c$, $P_1 - c P_2$ has nonzero constant coefficient and no repeated zero.
Hence we have the equality that
$\sum_{P = c} \frac{1}{\BT_{M_n, \gamma}(\rho)} =0$ if we can show that $\deg R \le \deg (P_1 - cP_2) -2$.
This is equivalent to showing that
$\deg R/P_2 \le \deg (P_1/P_2 - c) -2$, that is
$\deg \frac{k_n(y)}{\ell_n(y)} g \le \deg (P-c) -2$. 

Note that the degree of $s$ equals $n$ if $n \ge 3$ and equals $-n-2$ if $n \le -3$. 
By $P^2-4 =(s^2-4)g^2$, we have  $\deg (s g) \le \max\{\deg P,0\} = \deg (P-c)$ for generic $c$. Hence  
it holds that
\[
\deg (P-c) - \deg \frac{k_n(y)}{\ell_n(y)} g
\ge  \deg s(y)  -  \deg \frac{k_n(y)}{\ell_n(y)} 
= \deg k_n(y)h_n(y) 
=|n|-1 \ge 2,
\]
by $f_n(y)-y = k_n(y)h_n(y)$.
This proves that $\sum_{P = c} \frac{1}{\BT_{M_n, \gamma}(\rho)}=0$.

\subsubsection{$p$ is odd}
\label{subsec:check_X0_II}
In this case we can express $E_\gamma (m,y)$ as
\begin{align*}
  E_\gamma (m,y)
  &= m^{p-4q}  (u(y) m^{2}+ v(y))^q \\
  &= m^{-1} \sum_{k=0}^q {q \choose k} u^k(y)  v^{q-k}(y) m^{2k+p+1-4q} \\
  &= g m - h m^{-1}
\end{align*}
where we use the same notations $g = \delta_{(p+1)/2-2q}(y)$ and $h = \delta_{(p-1)/2-2q-1}(y)$
as in the case of an even $p$.
Similarly $m^{-p}\ell^{-q} = g m^{-1} - h m$.
By Eq.~\eqref{eq:jac_tor} the right hand in Proposition~\ref{prop:torsion_jabobian} turns out to be
\begin{align}
  -\frac{2 E_\gamma}{m} k_n(y) \BT_{M_n, \gamma}(\rho)
  &= \ell_n(y) \frac{\partial(m^2 + m^{-2} - s,  g m - h m^{-1})}{\partial(m, y)} \notag \\
  -\frac{2 E_\gamma}{m} \frac{ k_n(y) }{ \ell_n(y) } \BT_{M_n, \gamma}(\rho)
  &= (2m - 2 m^{-3}) (m g' - h' m^{-1}) + s' (g + h m^{-2}) \notag \\
  &= 2m^2 g' + s'g - 2h' + m^{-2}(s'h -2g') + 2m^{-4} h' \notag \\
  &= 2(s-m^{-2}) g' + s'g - 2h' + m^{-2}(s'h -2g') + 2(sm^{-2}-1)h' \notag \\
  &= m^{-2}(-4g' + s'h +2sh') +  2sg' +s'g -4h'. \label{eq:torsion_odd_case}
\end{align}

By taking derivative of $g^2 + h^2 - s g h =1$,
we have $2g g' + 2h h' - s'gh - s g'h - s g h'=0$.
This implies that $2sg' +s'g -4h' = - \frac {g}{h}(-4g' + s'h +2sh')$ and
we can rewrite Eq.~\eqref{eq:torsion_odd_case} as 
\begin{align*}
  -2 E_\gamma \frac{ k_n(y) }{ \ell_n(y) } \BT_{M_n, \gamma}(\rho)
  &= (m^{-1} - \frac{g}{h} m)(-4g' + s'h +2sh') \\
  &= -\frac{E_\gamma}{h} (-4g' + s'h +2sh').
\end{align*}
Since $E_\gamma = m^p \ell^q \not =0$, we have that
\begin{align}
  -\frac{ k_n(y) }{ \ell_n(y) } \BT_{M_n, \gamma}(\rho)
  &= -\frac{-4g' + s'h +2sh'}{2h} \notag \\
  &= \frac{2sg' +s'g -4h'}{2g}. \label{eq:torsion_rewritten}
\end{align}

Set $Q = (\tr \rho(\mu^p \lambda^q))^2$.  
The function $(\tr \rho(\mu^p \lambda^q))^2$ equals to
$(g-h)^2(m+m^{-1})^2 = (g-h)^2(s+2)$.
Then the fraction $Q' / (g -h)$ is expressed as
\begin{align*}
  \frac{Q'}{g -h}
  &= 2(g'-h')(s+2)+(g-h)s' \\
  &= (2sg' +s'g -4h') -(-4g' + s'h +2sh') \\
  &= (2sg' +s'g -4h') \left(1+ \frac{h}{g}\right).
\end{align*}
This implies that 
\[ \frac{Q'}{g^2 -h^2} =  \frac{2sg' +s'g -4h'}{g}.\]
Hence Eq.~\eqref{eq:torsion_rewritten} turns out to be 
\begin{equation}
  \label{eq:torsion_odd}
  -\frac{ k_n(y) }{ \ell_n(y) }  \BT_{M_n, \gamma}(\rho)  = \frac{Q'}{2(g^2 - h^2)}.
\end{equation}

Note that $\tr \rho(\mu^p \lambda^q) = (g-h)z$ where $z = \ve k_n(y)\sqrt{-\frac{h_n(y)}{\ell_n(y)}}$ and $\ve = \pm 1$.
Hence it follows that 
\[\sum_{\tr \rho(\mu^p \lambda^q)= c} \frac{1}{\BT_{M_n, \gamma}(\rho)}  =  \frac{1}{2} \sum_{Q = c^2}  \frac{1}{\BT_{M_n, \gamma}(\rho)}\]
for generic $c\in \BC$.
By Eq.~\eqref{eq:torsion_odd} 
we can rewrite $\frac{1}{2}\sum_{Q = c^2}  \frac{1}{\BT_{M_n, \gamma}(\rho)}$ as 
\[
\sum_{Q = c^2}  \frac{-k_n(y)}{\ell_n(y)} \cdot \frac{2(g^2 - h^2)} {Q'}
\]
since $z=\pm\sqrt{s(y)+2}$.
By similar arguments as in the above case with $c$, $P$ and $g$ replaced by $c^2-2$, $Q-2$ and $g^2-h^2$ respectively, we can show
\[\sum_{Q-2 = d}  \frac{-k_n(y)}{\ell_n(y)} \cdot \frac{2(g^2 - h^2)} {(Q-2)'} =0\]
for generic $d (=c^2-2) \in \BC$.
Since $g^2 + h^2 - s g h = 1$ and $Q-4 = (s+2)(g-h)^2 - 4(g^2 + h^2 - s g h) = (s-2)(g + h)^2$, 
the function $Q-2=(m^p\ell^q)^2 + (m^{-p}\ell^{-q})^2 = \tr \rho( (\mu^p \lambda^q)^2)$ satisfies 
$(Q-2)^2 - 4 = Q(Q-4) = (s^2 -4) (g^2 - h^2)^2$
which corresponds to \eqref{eq:P^2-4}.

We also have
\[(Q-2)^2 - 4 = h_n(y) \big[ h_n(y)k_n^2(y) + 4\ell_n(y) \big] \frac{k_n^2(y)}{\ell_n^2(y)}(g^2 - h^2)^2.\]
This implies that the denominator of $(Q-2)^2 - 4$ is exactly
the denominator of $\frac{k_n^2(y)}{\ell_n^2(y)}(g^2 - h^2)^2$.
Hence the denominator of $Q-2$ is exactly the denominator of $\frac{k_n(y)}{\ell_n(y)}(g^2 - h^2)$.

Write $Q-2=\frac{Q_1}{Q_2}$ and $-\frac{k_n(y)}{\ell_n(y)} \cdot 2(g^2 - h^2) = \frac{R}{Q_2}$ where
each pair of $\{Q_1, Q_2\}$ and $\{R, Q_2\}$ is coprime.
Then we can express the sum of $(\BT_{M_n, \gamma}(\rho))^{-1}$ as 
\[
\frac{1}{2}\sum_{Q-2 = d} \frac{1}{\BT_{M_n, \gamma}(\rho)}
= \sum_{Q_1 - d Q_2 =0} \frac{\frac{R}{Q_2}}{\left( \frac{Q_1 - d Q_2}{Q_2}\right)'}
= \sum_{Q_1 - d Q_2=0} \frac{\frac{R}{Q_2}}{\frac{(Q_1 - d Q_2)'}{Q_2}}
= \sum_{Q_1 - d Q_2=0} \frac{R}{(Q_1 - d Q_2)'}.
\]
This implies that  we can apply the Jacobi's residue theorem for generic $d$.
Hence we have the equality that 
$\sum_{\tr \rho(\mu^p \lambda^q) = c} \frac{1}{\BT_{M_n, \gamma}(\rho)} =0$
if we can show that $\deg R \le \deg (Q_1 - d Q_2) -2$ which is equivalent to
$\deg \frac{-k_n(y)}{\ell_n(y)} (g^2-h^2) \le \deg (Q-2-d) -2$.

By $(Q-2)^2 -4 = (s^2 - 4)(g^2 -h^2)^2$,
we have $\deg s(g^2-h^2) \le \max\{\deg (Q-2), 0\}=\deg (Q-2-d)$ for generic $d$.
Hence it holds that
\begin{align*}
&\deg (Q-2-d) - \deg \frac{-k_n(y)}{\ell_n(y)}(g^2-h^2) \\
& \ge \deg s(g^2-h^2) - \deg \frac{-k_n(y)}{\ell_n(y)}(g^2-h^2) \\
& = \deg \frac{k_n^2(y)h_n(y)}{\ell_n(y)}  -  \deg \frac{k_n(y)}{\ell(y)}\\
&= \deg k_n(y)h_n(y) = |n|-1 \ge 2
\end{align*}
by $k_n(y)h_n(y) = f_n(y) - y$.
We have thus proved
\[\sum_{\tr \rho(\mu^p \lambda^q)= c} \frac{1}{\BT_{M_n, \gamma}(\rho)}=0\] for generic $c\in \BC$ in the case of any odd $p$.

\subsection{$n \equiv 2 \pmod{4}$}
\label{subsec:n2mod4}
The character variety $X(\pi_1(M_n))$ is given by $D \cup L$ in this case.
We will see that the sums of $(\BT_{M_n, \gamma}(\rho))^{-1}$ on $D$ and $L$ are cancelled.
Propositions~\ref{prop:sumTonDevenp} and~\ref{prop:sumTonLevenp} prove Theorem~\ref{thm:main} in the case of even $p$.
Propositions~\ref{prop:sumTonDoddp} and~\ref{prop:sumTonLoddp} prove Thereom~\ref{thm:main} in the case of odd $p$.
\subsubsection{$p$ is even}
\label{subsec:evenpn2mod4}
As in the case $n \not \equiv 2 \pmod{4}$,  on the geometric component $D$ we have
\begin{align*}
  \frac{-k_n(y)}{\ell_n(y)} \BT_{M_n, \gamma}(\rho) &= \frac{P'}{g} \\
  \frac{-k_n(y)}{y \hat\ell_n(y)} \frac{g}{P'}  &= \frac{1}{\BT_{M_n, \gamma}(\rho)}
\end{align*}
where $P=sg - 2h=g(m^2+m^2) - 2h = \tr \rho(\mu^p\lambda^q)$.
We can see the rational function $P$ and $\frac{k_n(y)}{\hat \ell_n(y)} g$ has the same denominator as follows.
It follows from $g^2 + h^2 - sgh = 1$, $h_n(y)=y \hat h_n(y)$ and $\ell_n(y) = y \hat \ell_n(y)$ that
\begin{align}
  P^2 -4
  &= (gs - 2h)^2 - 4(g^2 + h^2 -sgh) \notag\\
  &= (s^2-4)g^2 \notag\\
  &= h_n(y) \big[h_n(y)k_n^2(y)+4\ell_n(y) \big] \left( \frac{k_n(y)}{\ell_n(y)}g \right)^2 \notag\\
  &= \hat h_n(y) \big[\hat h_n(y)k_n^2(y) + 4 \hat \ell_n(y) \big] \left( \frac{k_n(y)}{\hat \ell_n(y)}g \right)^2. \label{eq:P^2-4n2}
\end{align}
Write $P = \frac{P_1}{P_2}$ and $\frac{k_n(y)}{\hat \ell_n(y)} g = \frac{R_1}{R_2}$ where each pair of $\{P_1, P_2\}$ and $\{R_1, R_2\}$ is coprime.
Since any prime factor of the denominator of $g$ is a prime factor of $k_n$ or $\hat \ell_n$, 
any prime factor of the denominator $R_2$ is also a prime factor $k_n$ or $\hat \ell_n$. 
When we rewrite
\[\hat h_n(y) \big[\hat h_n(y)k_n^2(y) + 4 \hat \ell_n(y) \big] \frac{R_1^2}{R_2^2}\]
for the right hand of Eq.~\eqref{eq:P^2-4n2}, the denominator $R_2^2$ is coprime with $\hat h_n(y) \big[\hat h_n(y)k_n^2(y) + 4 \hat \ell_n(y) \big]$.
If there were any common factor of $R_2^2$ and $\hat h_n(y) \big[\hat h_n(y)k_n^2(y) + 4 \hat \ell_n(y) \big]$,
it would be $(y^2-2)^{2\alpha}$ $\alpha \geq 1$ in the case of $n \equiv 2 \pmod{8}$ since the denominator $P^2-4$ is $P_2^2$. This is impossible according to that $\hat \ell_n(y)$ is coprime with $\hat h_n(y)$.
We can set $R_2=P_2$ and $R_1=R$ such that $P_2$ and $R$ are coprime.
We can also see $R$ and $y$ are coprime which is equivalent to $g(0) \not= 0$ since $k_n(0) = (-1)^{k+1}2 \not= 0$ by Remark~\ref{lem:khell0}.

Since $u=0$ and $v=1$, that is $m^4\ell =1$, at $y=0$, $g(0)$ turns out to be 
\[
g(0) = \delta_{p/2-2q}(0)
= \sum_{k=0}^q \begin{pmatrix} q \\k \end{pmatrix} u^k(0)v^{q-k}(0) f_{k+p/2-2q}(s(0))
= f_{p/2-2q}(s(0)).
\]
\begin{lemma}
  \label{lem:s0}
  It holds that
  \[s(0)=-\frac{4}{n} \quad \text{and} \quad g(0) \not= 0.\]
\end{lemma}
\begin{proof}
  It follows from Remark~\ref{rem:param_X}~\eqref{item:intersectDL} that
  \[ s(0) = z^2 - 2 =4\left(\frac{1}{2} - \frac{1}{n}\right) -2 = -\frac{4}{n}.\]
  If $g(0)=0$, then $s(0) = m^2 + m^{-2}$ is a root of $f_{p/2-2q}(u)$, $m^{\pm 2}$ are $(p-4q)$-th roots of unity
  which means $-2 \le s(0) \le 2$.
  Since every Chebyshev polynomial is a monic polynomial with integer coefficients, 
  the roots of $f_{p/2-2q}(u)$ are algebraic integers.
  It is known that algebraic integers in $\BQ$ are integers. Hence $s(0)=-4/n$ must be $\pm 1, \pm 2$
  which can not occur by $|n| > 2$ and $n \equiv 2 \pmod{4}$.
\end{proof}

We have the sum of $(\BT_{M_n, \gamma}(\rho))^{-1}$ on the component $D$ as follows.
\begin{proposition}
  \label{prop:sumTonDevenp}
  It holds that, on the component $D$,
  \[
  \sum_{[\rho] \in \tr_{\gamma}^{-1} (c)} \frac{1}{\BT_{M_n, \gamma}(\rho)} 
  = -\frac{8}{n} \frac{f_{p/2-2q}(-4/n)}{T_{p/2-2q}(-4/n)-c}
  \]
  where $T_k(z)$ stands for $f_{k+1}(z) - f_{k-1}(z)$.
\end{proposition}
Note that $T_k(z) = a^k + a^{-k}$ if $z=a+a^{-1}$.
\begin{proof}
  We can express the sum of $(\BT_{M_n, \gamma}(\rho))^{-1}$ over $P(y)=c$ as
  \begin{align*}
    \sum_{P=c}\frac{1}{\BT_{M_n, \gamma}(\rho)}
    &= \sum_{P_1 - c P_2 =0}\frac{-\frac{R}{yP_2}}{\left(\frac{P_1-cP_2}{P_2}\right)'}
    = \sum_{P_1 - c P_2 =0}\frac{-\frac{R}{yP_2}}{\frac{(P_1-cP_2)'}{P_2}} \\
    &= \sum_{P_1 - c P_2 =0}\frac{-R}{y(P_1-cP_2)'}
    = \sum_{P_1 - c P_2 =0}\frac{-R}{[yP_1-cP_2]'}.
  \end{align*}
  As in the case $n \not \equiv 2 \pmod{4}$, the Jacobi's residue theorem implies that 
  \[
  \left.\frac{-R}{[y(P_1-cP_2)]'}\right|_{y=0} + \sum_{P_1 - c P_2 =0}\frac{-R}{[yP_1-cP_2]'}
  = \sum_{y(P_1 - c P_2) =0}\frac{-R}{[yP_1-cP_2]'} = 0.
  \]
  Hence it holds that
  \begin{align*}
    \sum_{P=c}\frac{1}{\BT_{M_n, \gamma}(\rho)}
    &= \left.\frac{R}{[y(P_1-cP_2)]'}\right|_{y=0}\\
    &= \left.\frac{R}{P_1-cP_2}\right|_{y=0} \\
    &= \left.\frac{R/P_2}{(P_1-cP_2)/P_2}\right|_{y=0} \\
    &= \left.\frac{k_n(y)}{\hat \ell_n(y)} \frac{g}{P-c}\right|_{y=0}
  \end{align*}
  We are going to carry out $P(0) = s(0)g(0)-2h(0)$.
  \begin{align*}
    h(0) &= \delta_{p/2-2q-1}(0)
    = \left. \sum_{k=0}^q \begin{pmatrix} q \\k \end{pmatrix} u^k(y)v^{q-k}(y) f_{k+p/2-2q-1}(s(y)) \right|_{y=0}
    = f_{p/2-2q-1}(s(0)).
  \end{align*}
  This implies that
  \begin{align*}
    P(0)
    &= s(0)g(0)-2h(0) \\
    &= s(0)f_{p/2-2q}(s(0)) -2 f_{p/2-2q-1}(s(0)) \\
    &= f_{p/2-2q+1}(s(0)) - f_{p/2-2q-1}(s(0)) \\
    &= T_{p/2 -2q}(s(0)).
  \end{align*}  
  It follows from Remark~\ref{lem:khell0} and Lemma~\ref{lem:s0} that 
  \[
  \sum_{P(y)=c}\frac{1}{\BT_{M_n, \gamma}(\rho)} = -\frac{4}{n} \frac{f_{p/2-2q}(-4/n)}{T_{p/2-2q}(-4/n)-c}.
  \]
  Since $z=\pm \sqrt{2+s(y)}$, we have the following equality
  \[
  \sum_{[\rho] \in \tr_{\gamma}^{-1} (c)} \frac{1}{\BT_{M_n, \gamma}(\rho)} 
  = 2\sum_{P(y)=c}\frac{1}{\BT_{M_n, \gamma}(\rho)} = -\frac{8}{n} \frac{f_{p/2-2q}(-4/n)}{T_{p/2-2q}(-4/n)-c}.
  \]
\end{proof}

On the component $L$ giving $y=0$, we have $\ell=m^{-4}$.
We set $t= m^2+m^{-2} (= z^2-2)$ since $s$ is no longer equal to $m^2+m^{-2}$ on $L$.
Then $P=\tr \rho(\mu^p \lambda^q) = m^{p-4q} + m^{-(p-4q)} = T_{p/2 - 2q}(t)$.
We express ${\BT_{M_n, \gamma}}(\rho)$ as
\[
\BT_{M_n, \gamma}(\rho)
= -\frac{p-4q}{4}\left(2 + (m^2 + m^{-2} \frac{n}{2})\right)
= -\frac{p-4q}{4}\left(2+t\frac{n}{2}\right).
\]
Since $z=\pm\sqrt{t+2}$, we have
\[
\sum_{[\rho] \in \tr_\gamma^{-1}(c)} \frac{1}{\BT_{M_n, \gamma}(\rho)}
= 2 \sum_{T_{p/2-2q}(t)=c}\frac{-4}{p-4q}\frac{1}{2+t\frac{n}{2}}
= \frac{-8/n}{p/2-2q} \sum_{T_{p/2-2q}(t)=c}\frac{1}{t+4/n}.
\]
\begin{proposition}
  \label{prop:sumTonLevenp}
  On the component $L$ it holds that 
  \[
  \sum_{[\rho] \in \tr_\gamma^{-1}(c)} \frac{1}{\BT_{M_n, \gamma}(\rho)}
  = \frac{8}{n} \frac{f_{p/2 -2q}(-4/n)}{T_{p/2-2q}(-4/n)-c}.
  \]
\end{proposition}
\begin{proof}
  It follows from Lemma~\ref{lemma:sum_t-b} that
  \begin{align*}
  \sum_{[\rho] \in \tr_\gamma^{-1}(c)} \frac{1}{\BT_{M_n, \gamma}(\rho)}
  &= \frac{-8/n}{p/2-2q} \sum_{T_{p/2-2q}(t)=c}\frac{1}{t+4/n} \\
  &= \frac{-8/n}{p/2-2q} \left( -\frac{(p/2-2q) f_{p/2-2q}(-4/n)}{T_{p/2-2q}(-4/n)-c}\right).
  \end{align*}
\end{proof}

\begin{lemma}
  \label{lemma:sum_t-b}
  \[\sum_{T_r(t)=c} \frac{1}{t-b} = -\frac{rf_r(b)}{T_r(b)-c}.\]
\end{lemma}
\begin{proof}
  We first claim that $T_r'(t)=rf_r(t)$.
  If we write $t=a + a^{-1}$, then $T'_r(t) = \frac{dT_r/da}{dt/da} = rf_r(t)$.
  By $1/r=f_r(t)/T'_r(t)$ we have
  \begin{align*}
    \frac{1}{r}\sum_{T_r(t)=c}\frac{1}{t-b}
    &= \sum_{T_r(t)=c}\frac{1}{t-b} \frac{f_r(t)}{(T_r(t)-c)'} \\
    &= \sum_{T_r(t)=c}\frac{f_r(t)}{[(t-b)(T_r(t)-c)]'}.
  \end{align*}
  Since the degree of $f_r(t)$ is $|r|-1$ and the degree of $(t-b)(T_r(t)-c)$ is $|r|+1$,
  the Jacobi's residue theorem says that
  \[\sum_{(t-b)(T_r(t)-c)=0}\frac{f_r(t)}{[(t-b)(T_r(t)-c)]'}=0.\]
  This implies that
  \begin{align*}
    \sum_{T_r(t)=c}\frac{f_r(t)}{[(t-b)(T_r(t)-c)]'}
    &= - \left.\frac{f_r(t)}{[(t-b)(T_r(t)-c)]'}\right|_{t=b} \\
    &= - \left.\frac{f_r(t)}{T_r(t)-c}\right|_{t=b} \\
    &= - \frac{f_r(b)}{T_r(b)-c}.
  \end{align*}
  Hence
  \[
  \sum_{T_r(t)=c} \frac{1}{t-b}
  = r \sum_{T_r(t)=c}\frac{f_r(t)}{[(t-b)(T_r(t)-c)]'}
  = -\frac{rf_r(b)}{T_r(b)-c}.
  \]  
\end{proof}
Combining Propositions~\ref{prop:sumTonDevenp} and~\ref{prop:sumTonLevenp}, we obtain Theorem~\ref{thm:main} in the case that $n \equiv 2 \pmod{4}$ and $p$ is even.

\subsubsection{$p$ is odd.}
On the component $D$,
we adapt the discussion in the case of $n \not\equiv 2 \pmod{4}$ as like the case of even $p$ for $n \equiv 2 \pmod{4}$.
Recall that $E_\gamma = m^p\ell^q= gm - hm^{-1}$ and $\tr \rho(\mu^p\lambda^q) = (g-h)(m+m^{-1}) = (g-h)z$.
When we set $Q = (\tr \rho(\mu^p\lambda^q))^2$, we have the following as in~Subsection~\ref{subsec:check_X0_II}:
\[
  -\frac{ k_n(y) }{ \ell_n(y) }  \BT_{M_n, \gamma}(\rho)  = \frac{Q'}{2(g^2 - h^2)}.
\]

\begin{proposition}
  \label{prop:sumTonDoddp}
  On the component $D$, we have
  \[
  \sum_{\tr \rho(\mu^p \lambda^q)= c} \frac{1}{\BT_{M_n, \gamma}(\rho)}
  = -\frac{8}{n} \frac{f_{p-4q}(-4/n)}{T_{p-4q}(-4/n)+2-c^2}.
  \]
\end{proposition}
\begin{proof}
  It follows from $z=\pm\sqrt{s(y)+2}$ that for generic $c \in \BC$.
  \begin{align*}
    \sum_{\tr \rho(\mu^p \lambda^q)= c} \frac{1}{\BT_{M_n, \gamma}(\rho)}
    &=\sum_{Q(y)=c^2}\frac{-k_n(y)}{\ell_n(y)}\frac{2(g^2-h^2)}{Q'}\\
    \intertext{putting $d=c^2-2$}
    &=\sum_{Q(y)-2=d}\frac{-k_n(y)}{\ell_n(y)}\frac{2(g^2-h^2)}{(Q-2)'},\\
    \intertext{since we can write $Q-2 = \frac{Q_1}{Q_2}$ and $-\frac{k_n(y)}{\hat \ell_n(y)}2(g^2-h^2) = \frac{R}{Q_2}$ by the same argument as in Subsection~\ref{subsec:evenpn2mod4},}
    &= \sum_{Q_1 - d Q_2 = 0}\frac{\frac{R}{yQ_2}}{\left(\frac{Q_1-d Q_2}{Q_2}\right)'} \\
    &= \sum_{Q_1 - d Q_2 = 0}\frac{R}{y(Q_1-d Q_2)'} \\
    &= \sum_{Q_1 - d Q_2 = 0}\frac{R}{[y(Q_1-d Q_2)]'}.
  \end{align*}
  The Jacobi's residue theorem implies that
  \[
  \left. \frac{R}{[y(Q_1-d Q_2)]'} \right|_{y=0} + \sum_{Q_1 - d Q_2 = 0}\frac{R}{[y(Q_1-d Q_2)]'}
  =\sum_{y(Q_1 - d Q_2) = 0}\frac{R}{[y(Q_1-d Q_2)]'}
  =0.
  \]
  Therefore it holds that
  \begin{align}
    \sum_{\tr \rho(\mu^p \lambda^q)= c} \frac{1}{\BT_{M_n, \gamma}(\rho)}
    &= -\left. \frac{R}{[y(Q_1-d Q_2)]'} \right|_{y=0} \notag\\
    &= -\left. \frac{R}{(Q_1-d Q_2)} \right|_{y=0} \notag\\
    &= -\left. \frac{R/Q_2}{(Q_1-d Q_2)/Q_2} \right|_{y=0} \notag\\
    &= -\left.\frac{-\frac{k_n(y)}{\hat \ell_n(y)}2(g^2-h^2)}{Q-c^2}\right|_{y=0}. \label{eq:sumToddpn2}
  \end{align}
  The value of $g^2 - h^2$ at $y=0$ turns out to be
  \[
  g^2(0)-h^2(0)
  = j_{p-4q}(s(0)) \ell_{p-4q}(0)
  = f_{p-4q}(s(0))
  \]
  since 
  \begin{align*}
    g(0) &= \delta_{(p+1)/2-2q}(0) =f_{(p+1)/2-2q}(s(0)),\\
    h(0) &= \delta_{(p-1)/2-2q-1}(0) = f_{(p-1)/2-2q}(s(0))
  \end{align*}
  by $u=0$ and $v=1$ at $y=0$.
  By $Q=(m^{p-4q}+m^{-(p-4q)})^2 = m^{2(p-4q)} + m^{-2(p-4q)} +2 = T_{p-4q}(s(0)) + 2$ and
  $k_n(0)/\ell_n(0) = s(0)=-4/n$,
  we can rewrite Eq.~\eqref{eq:sumToddpn2} as
  \[
  \sum_{\tr \rho(\mu^p \lambda^q)= c} \frac{1}{\BT_{M_n, \gamma}(\rho)}
  = -\frac{4}{n} \frac{2f_{p-4q}(-4/n)}{T_{p-4q}(-4/n)+2-c^2}.
  \]
\end{proof}

We can also carry out the sum of $(\BT_{M_n, \gamma}(\rho))^{-1}$ on $L$.
\begin{proposition}
  \label{prop:sumTonLoddp}
  On the component $L$, we have
  \[
  \sum_{\tr \rho(\mu^p \lambda^q)= c} \frac{1}{\BT_{M_n, \gamma}(\rho)}
  = \frac{8}{n} \frac{f_{p-4q}(-4/n)}{T_{p-4q}(-4/n)+2-c^2}.
  \]
\end{proposition}
\begin{proof}
  Recall that $z=m+m^{-1}$. We use the symbol $t$ for $m^2 + m^{-2} = z^2 -2$.
  The trace $\tr \rho(\mu^p\lambda^q)$ turns into $m^{p-4q} + m^{-(p-4q)} = T_{p-4q}(z)$ by $\ell=m^{-4}$.
  We have
  \begin{align*}
    \sum_{\tr \rho(\mu^p \lambda^q)= c} \frac{1}{\BT_{M_n, \gamma}(\rho)}
    &= \sum_{T_{p-4q}(z) = c} \frac{1}{\BT_{M_n, \gamma}(\rho)} \\
    &= \frac{1}{2} \sum_{T_{p-4q}^2(z) = c^2} \frac{1}{\BT_{M_n, \gamma}(\rho)} \\
    &= \frac{1}{2} \sum_{T_{2p-8q}^2(z) = c^2-2} \frac{1}{\BT_{M_n, \gamma}(\rho)} \\
    &= \sum_{T_{p-4q}(t) = c^2-2} \frac{1}{\BT_{M_n, \gamma}(\rho)} \\
    &= \sum_{T_{p-4q}(t) = c^2-2} \frac{-4}{p-4q} \frac{1}{2+t\frac{n}{2}} \\
    &= -\frac{8}{n} \sum_{T_{p-4q}(t) = c^2-2} \frac{1}{p-4q} \frac{1}{t+\frac{4}{n}} \\
    &= \frac{8}{n} \frac{f_{p-4q}(-4/n)}{T_{p-4q}(-4/n)-(c^2-2)}
  \end{align*}
  by Lemma~\ref{lemma:sum_t-b}.
\end{proof}
Combining Propositions~\ref{prop:sumTonDoddp} and~\ref{prop:sumTonLoddp},
we obtain Theorem~\ref{thm:main} in the case that $n \equiv 2 \pmod{4}$ and $p$ is odd.  

\section{Torus knot exteriors}
Let $r$, $s$ be coprime integers and $T_{r, s}$ the torus knot of type $(r, s)$. 
Set $M_{r, s}= S^3 \setminus \mathrm{Int} N(T_{r, s})$ where $N(T_{r, s})$ is a neighborhood of $T_{r, s}$.
We show that the sum of $\BT_{M_{r, s}, \gamma}^{-1}(\rho)$ equals $\pm 2$ for all slopes $\gamma$.
For simplicity, we assume $r > 0$ and $s > 0$.

\subsection{The sum for $\gamma=\mu$}
When we present $\pi_1(M_{r, s})$ as 
\begin{equation}
  \label{eq:pres_torusknotgroup}
  \pi_1(M_{r, s}) = \la x, y \,|\, x^r = y^s \ra,
\end{equation} 
according to~\cite{Johnson}, 
the character variety $X(\pi_1(M_{r, s}))$ consists of $(r-1)(s-1)/2$ components which are indexed by the following integers $a$ and $b$:
\begin{itemize}
\item $0 < a < r$, $0 < b < s$;
\item $a \equiv b \pmod{2}$ and;
\item $\tr \rho(x) = 2 \cos (\pi a / r)$, $\tr \rho(y) = 2 \cos (\pi b / s)$ for any conjugacy class $[\rho]$ on its component.
\end{itemize}
Each component in $X(\pi_1(M_{r, s}))$ is a complex line $\BC$ with local parameter $\tr_{\mu}$ of the meridian $\mu$.
Under the presentation~\eqref{eq:pres_torusknotgroup}
the longitude $\lambda$ satisfies
\[\lambda = x^r \mu^{-rs}.\]
The adjoint Reidemeister torsion $\BT_{M_{r, s}, \mu}(\rho)$ turns to be locally constant on $X(\pi_1(M_{r, s}))$ as follows.
\begin{lemma}
  \label{lemma:torsion_torus_knot_meridian}
  For an $\SL_2(\BC)$-representation $\rho$ such that $[\rho]$ lies in the component with index $(a, b)$, the inverse of $\BT_{M_{r, s}, \mu}(\rho)$ is expressed as
  \[(\BT_{M_{r, s}, \mu}(\rho))^{-1}
  = \frac{16}{rs} \sin^2 \left(\frac{\pi a}{r}\right) \sin^2 \left( \frac{\pi b}{s} \right).\]
\end{lemma}
\begin{proof}
  The eigenvalues $m$ and $\ell$ for $\rho(\mu)$ and $\rho(\lambda)$ satisfy that 
$\ell = (-1)^a m^{-rs}$.
  By the curve change formula~\eqref{eq:curve_change}, we have
  \begin{align*}
    (\BT_{M_{r, s}, \mu}(\rho))^{-1}
    &= \left( \frac{d \log m}{d \log \ell} \right)^{-1} (\BT_{M_{r, s}, \lambda}(\rho))^{-1} \\
    &= \left( \frac{1}{-rs} \right)^{-1} (\BT_{M_{r, s}, \lambda}(\rho))^{-1}.
  \end{align*}
  The lemma follows from the result in~\cite[\S~6.2]{Dubois} that
  \[(\BT_{M_{r, s}, \lambda}(\rho))^{-1}
  = - \frac{16}{r^2s^2} \sin^2 \left(\frac{\pi a}{r}\right) \sin^2 \left( \frac{\pi b}{s} \right).\]
  Note that the torsion in~\cite{Dubois} turns into the inverse of our torsion in this paper according to the difference in the conventions.
\end{proof}
We can now carry out the inverse sum of adjoint Reidemeister torsions for the torus knot exterior $M_{r, s}$ and the meridian $\mu$.
\begin{theorem}
  \label{thm:sum_meridian}
  For a generic complex number $c$, it holds that 
  \begin{equation}
    \label{eq:sum_meridian}
    \sum_{[\rho] \in \tr_{\mu}^{-1}(c)} \frac{1}{\BT_{M_{r, s}, \mu}(\rho)} = 2.
  \end{equation}
\end{theorem}
\begin{proof}
  Since we can regard every component in $X(\pi_1(M_{r, s}))$ as a complex line $\BC$ by the function $\tr_{\mu}$, there exists a single conjugacy class $[\rho]$ such that $\tr \rho(\mu) = c$ on each component. The inverse of $\BT_{M_{r, s}, \mu}(\rho)$ runs over all values once in the summation.

  In the case of even $r$ and odd $s$,
  we can express the sum of $(\BT_{M_{r, s}, \mu}(\rho))^{-1}$ as 
  \begin{align}
    &\sum_{[\rho] \in \tr_{\mu}^{-1}(c)} \frac{1}{\BT_{M_{r, s}, \mu}(\rho)} \notag \\
    &= \frac{16}{rs}\left\{
    \left(
    \sin^2\left(\frac{\pi}{r}\right) + \cdots + \sin^2\left(\frac{(r-1)\pi}{r}\right)
    \right)
    \left(
    \sin^2\left(\frac{\pi}{s}\right) + \cdots + \sin^2\left(\frac{(s-2)\pi}{s}\right)
    \right) \right. \notag \\
    &\quad \left.
    + \left(
    \sin^2\left(\frac{2\pi}{r}\right) + \cdots + \sin^2\left(\frac{(r-2)\pi}{r}\right)
    \right)
    \left(
    \sin^2\left(\frac{2\pi}{s}\right) + \cdots + \sin^2\left(\frac{(s-1)\pi}{s}\right)
    \right)
    \right\}. \label{eq:sum_torsion_torusknot}
  \end{align}
  from Lemma~\ref{lemma:torsion_torus_knot_meridian}.
  It follows from
    $\sin^2(\pi/s) + \cdots + \sin^2((s-2)\pi/s)
    =\sin^2(\pi - \pi/s) + \cdots + \sin^2(\pi - (s-2)\pi/s)
    =\sin^2((s-1)\pi/s) + \cdots + \sin^2(2\pi/s)$
  that 
  \begin{align*}
    \sin^2\left(\frac{\pi}{s}\right) + \cdots + \sin^2\left(\frac{(s-2)\pi}{s}\right)
    &= \frac{1}{2} \sum_{k=1}^{s-1} \sin^2 \left(\frac{k\pi}{s}\right) \\
    &= \frac{1}{2} \sum_{k=1}^{s-1} \left( \frac{1}{2\sqrt{-1}} (e^{\frac{k\pi\sqrt{-1}}{s}} - e^{-\frac{k\pi\sqrt{-1}}{s}}) \right)^2 \\
    &= \frac{1}{2} \left( \frac{s-1}{2} - \frac{1}{4} \sum_{k=1}^{s-1} (e^{\frac{2k\pi\sqrt{-1}}{s}} + e^{-\frac{2k\pi\sqrt{-1}}{s}})\right) \\
    &= \frac{s}{4}
  \end{align*}
  Similarly we have $\sum_{k=1}^{r-1} \sin^2( k\pi / r) = r/2$.
  The sum~\eqref{eq:sum_torsion_torusknot} turns to be
  \[
    \eqref{eq:sum_torsion_torusknot}
    = \frac{16}{rs} \left(\sum_{k=1}^{r-1} \sin^2\frac{k\pi}{r}\right) \frac{s}{4} 
    = 2.
  \]
  In the case of odd $r$ and odd $s$, we can also have
  \[\sum_{[\rho] \in \tr_{\mu}^{-1}(c)} \frac{1}{\BT_{M_{r, s}, \mu}(\rho)}
  = \frac{16}{rs} \left(\frac{r}{4} \frac{s}{4} + \frac{r}{4} \frac{s}{4}\right)
  =2.\]
\end{proof}
\begin{remark}
  In the case of $r=2$, we can find another approach to compute the inverse sum of $\BT_{M_{2, s}, \mu}(\rho)$ in~\cite[Remark~1.5]{Yoon2}.
\end{remark}

\subsection{The sum for a general slope}
Finally we are going to carry out the sum of $(\BT_{M_{r, s}, \gamma}(\rho))^{-1}$ for all slopes $\gamma = \mu^p\lambda^q$.
\begin{theorem}
  \label{thm:sum_torusknot_general}
  For any slope $\gamma = \mu^p\lambda^q$ and a generic $c \in \BC$, it holds that
  \[\sum_{[\rho] \in \tr_{\gamma}^{-1}(c)} \frac{1}{\BT_{M_{r, s}, \gamma}(\rho)}
  = \frac{|p-rsq|}{p-rsq} 2.\]
\end{theorem}
\begin{proof}
  The slope $\gamma$ satisfies that $\gamma = \mu^p \lambda^q = x^{rq} \mu^{p-rsq}$.
  The eigenvalues $\xi^{\pm 1}$ of $\rho(\gamma)$ satisfies
  $\xi = (-1)^{aq} m^{p-rsq}$ on the component with index $(a, b)$ in $X(\pi_1(M_{r, s}))$.
  By the curve change formula~\eqref{eq:curve_change},
  we can express $(\BT_{M_{r, s}, \gamma}(\rho))^{-1}$ as
  \[(\BT_{M_{r, s}, \gamma}(\rho))^{-1}
  = \left( \frac{d \log \xi}{d \log m}\right)^{-1} (\BT_{M_{m, n}, \mu}(\rho))^{-1}
  = \frac{1}{p-rsq} (\BT_{M_{r, s}, \mu}(\rho))^{-1}.\]
  Recall from Proposition~\ref{prop:sumTonDevenp} that $T_n(z)= f_{n+1}(z) - f_{n-1}(z)$.
  If we put $z=m+m^{-1}$, then $T_n$ satisfies
  $T_n(m+m^{-1}) = m^n + m^{-n}$.
  We can rewrite the condition $\tr \rho (\gamma) = c$ on the component with index $(a, b)$ as
  \[ T_{p-rsq}(z) = (-1)^{aq}c \]
  for $z = \tr \rho(\mu) = m+m^{-1}$.
  Actually $T_n$ is the Chebyshev polynomial of the first kind with degree $|n|$.
  It is known that $T_n$ has $|n|$ different simple roots.
  Since $z=\tr\rho(\mu)$ is a local coordinate on every component in $X(\pi_1(M_{r, s}))$, there exists $|p - rsq|$ conjugacy classes $[\rho]$ such that $\tr \rho(\gamma)=c$ on each component.
  By Theorem~\ref{thm:sum_meridian}, we obtain
  \begin{align*}
    \sum_{[\rho] \in \tr_{\gamma}^{-1}(c)} \frac{1}{\BT_{M_{r, s}, \gamma}(\rho)}
    &= \frac{1}{p-rsq} \sum_{[\rho] \in \tr_{\gamma}^{-1}(c)} \frac{1}{\BT_{M_{r, s}, \mu}(\rho)} \\
    &= \frac{|p-rsq|}{p-rsq}2.
  \end{align*}
\end{proof}
\begin{corollary}
  \label{cor:nonvanish_torusknot}
  The vanishing identity of the adjoint Reidemeister torsion does not hold for any torus knot exteriors and any slopes.
\end{corollary}
\section*{Acknowledgements}
The authors wish to express their thanks to D.~Gang, S.~Kim, and S.~Yoon for helpful conversations and pointing out a mistake in the previous version.  
The first author has been supported by grants from the Simons Foundation (\#354595 and \#708778).
The second author has been supported by JSPS KAKENHI Grant Number 21K03242.

\end{document}